\newcommand{\toc}{\tableofcontents}
\theoremstyle{plain}
\newtheorem{theorem}{Theorem}[section]
\newtheorem*{theorem*}{Theorem}
\newtheorem{corollary}[theorem]{Corollary}
\newtheorem*{corollary*}{Corollary}
\newtheorem{lemma}[theorem]{Lemma}
\newtheorem{proposition}[theorem]{Proposition}
\newtheorem{prop}[theorem]{Proposition}
\theoremstyle{definition}
\newtheorem{remark}[theorem]{Remark}
\newtheorem{question}[theorem]{Question}
\newtheorem{definition}[theorem]{Definition}
\newtheorem*{definition*}{Definition}
\newcommand{\acts}{\curvearrowright}
\newcommand{\p}{\varphi}
\newcommand{\e}{\varepsilon}
\newcommand{\IR}{\mathbb{R}}
\newcommand{\CI}{\mathbb{C}}
\newcommand{\IN}{\mathbb{N}}
\newcommand{\NI}{\mathbb{N}}
\newcommand{\M}{\mathrm{M}}
\renewcommand{\1}{\mathbf{1}}
\DeclareMathOperator{\ssi}{\Leftrightarrow}
\DeclareMathOperator{\impl}{\Rightarrow}
\DeclareMathOperator{\Id}{\mathrm{Id}}
\newcommand{\IE}{\mathbb{E}}
\newcommand{\GL}{\mathrm{GL}}
\newcommand{\C}{\mathbf{C}}
\newcommand{\ts}{\textsection}
\newcommand{\ip}[1]{\langle#1\rangle} 
\newcommand{\St}{\mathop{\mathrm{St}}}
\newcommand{\hh}{\mathcal{H}}
\newcommand{\hk}{\mathcal{K}}
\newcommand\opnorm[1]{\|#1\|}
\title{Positive definite maps on amenable groups} 
\author{Mika\"el Pichot}
\author{Erik S\'eguin}
\address{Mika\"el Pichot, McGill University, 805 Sherbrooke St W., Montr\'eal, QC H3A 0B9, Canada}\email{mikael.pichot@mcgill.ca}
\address{Erik S\'eguin, McGill University, 805 Sherbrooke St W., Montr\'eal, QC H3A 0B9, Canada}\email{erik.seguin@mail.mcgill.ca}
\begin{document}

\begin{abstract}
We describe conditions that characterize amenability for groups in terms of positive definite functions valued in a von Neumann algebra. 
\end{abstract}

\maketitle

\section{Introduction}

Let $G$ be a (discrete)  group, $\hh$ be a  Hilbert space,  $B(\hh)$ be the algebra of bounded operators on $\hh$ endowed with the operator norm $\|\cdot\|$, and $U(\hh)$ be the group of unitary operators on $\hh$. 
  Let $\e\geq 0$. 
   A map $\varphi\colon G\to B(\hh)$ 
  is called an \emph{$\e$-representation} of $G$ if 
\[
\|\p(xy)-\p(x)\p(y)\|\leq \e
\] 
for all $x,y\in G$; we also use the term \emph{$\e$-multiplicative} to refer to such maps.

We are interested in the following question.

\begin{question}\label{Q - stable} Does there exist a non-amenable group $G$ such that
for every ${\delta>0}$ there exists some ${\e>0}$ such that for every Hilbert space $\hh$ and every $\e$-representation ${\p\colon G\to U(\hh)}$ there exists a unitary representation ${\pi\colon G\to U(\hh)}$ such that 
\[\opnorm{\p(x)-\pi(x)}<\delta\]
for all ${x\in G}$?
\end{question}

The conclusion, that there must exist a unitary representation $\pi$ approximating $\p$ to within $\delta$, is a ``stability condition'' in the sense of Ulam (see \cite{Ulam}) for the unitary representations of $G$. It is a well-known theorem of Kazhdan (see \cite[Theorem 1]{kazhdan82}) that discrete amenable groups are stable in this sense, but the converse remains open (see e.g., \cite{bot13, dc18, dcot19}). 

In the present paper, we give new characterizations of amenability which are ``intermediate'' between the usual definition of amenability (in terms of the existence of an invariant mean) and this stability condition. Roughly speaking, amenability can be viewed as a property ensuring the existence of  positive definitive maps from $G$ to $B(\hh)$ with certain properties. We refer the reader to \cite{Greenleaf} or \cite{pier84} for an introduction to amenable groups. Several well-known characterizations can be found in \cite[Theorem 2.6.8]{BO}.

Stability conditions for the unitary representations of a group can be reformulated in terms of unital positive definite maps. Let us note  that it is equivalent to prove the existence of a unitary representation $\pi$ close to a given $\e$-multiplicative map $\p$, or that of a  unital positive definite  map $\psi$ close to $\p$ which is also $\delta$-multiplicative, provided that  $\delta>0$ can be chosen to be sufficiently small compared to $\e$. The following proposition (discussed further in \ts\ref{S - stable unitary rep}) makes this statement more precise:

\begin{proposition}\label{P - Prop 1}  Let $G$ be a group and $\hh$ be a Hilbert space.
The following are equivalent:
\begin{enumerate}
\item  there exist real numbers $\kappa>0$ and $0<\delta<1$ such that for every ${0<\e<\delta}$ and every unitary $\e$-representation ${\p\colon G\to U(\hh)}$  there exists a unitary representation ${\pi\colon G\to U(\hh)}$ such that  
\[
\|\p(x)-\pi(x)\|\leq\kappa\e
\]
for all $x\in G$;
\item there exist real numbers $\kappa_1,\kappa_2>0$, $0<\delta<1$, and $p>1$ such that for every $0<\e<1$ such that $\kappa_1\e^{p-1}\leq\delta^{p-1}$   and every $\e$-representation ${\p\colon G\to U(\hh)}$  there exists a unital positive definite ${\kappa_1\e^p}$-representation ${\psi:G\to B(\hh)}$ such that 
\[\opnorm{\p(x)-\psi(x)}\leq\kappa_2\e\]
for all ${x\in G}$.
\end{enumerate}

\end{proposition}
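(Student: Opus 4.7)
This is immediate: given the unitary representation $\pi$ produced by (1), set $\psi := \pi$. A unitary representation is automatically unital positive definite and exactly multiplicative, hence $\kappa_1 \e^p$-multiplicative for any choice of $\kappa_1, p$; and the bound $\|\p(x) - \pi(x)\| \leq \kappa \e$ gives the required closeness.

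\textbf{Direction (2) $\Rightarrow$ (1).} The plan is an iterative polar-correction scheme. Starting from $\p_0 := \p$ with multiplicativity scale $\e_0 := \e$, applying (2) produces a unital positive definite $(\kappa_1 \e_0^p)$-multiplicative $\psi_1 \colon G \to B(\hh)$ with $\|\p_0 - \psi_1\| \leq \kappa_2 \e_0$. Positive definiteness forces $\psi_1(x^{-1}) = \psi_1(x)^*$ and $\|\psi_1(x)\| \leq 1$, and the multiplicativity bound then yields $\|\psi_1(x)^* \psi_1(x) - I\| \leq \kappa_1 \e_0^p$. For $\e_0$ small each $\psi_1(x)$ is invertible, so the polar decomposition $\psi_1(x) = \p_1(x)|\psi_1(x)|$ defines a unitary-valued map $\p_1 \colon G \to U(\hh)$. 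I would iterate this --- at each step applying (2) to the current unitary $\e_n$-representation and polar-correcting --- to produce sequences $(\p_n)$ of unitary maps and $(\psi_n)$ of unital positive definite maps.

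The key technical lemma underlying each step is the following rigidity statement: if $\psi$ is unital positive definite and $\delta$-multiplicative, then the unitary $u$ from its polar decomposition satisfies $\|u - \psi\| \leq C\delta$ and is itself $C\delta$-multiplicative, for a universal constant $C$. The distance bound comes from $\|(I + E)^{-1/2} - I\| = O(\|E\|)$ applied to $E = \psi(x)^* \psi(x) - I$, and the multiplicativity follows by expanding $u(xy) - u(x) u(y)$ using $u(z) = \psi(z)(I + T(z))$ with $\|T(z)\| \leq C\delta$ and collecting the cross-terms. Consequently, at each iteration step, $\e_{n+1} \leq C\kappa_1 \e_n^p$ while $\|\p_n - \p_{n+1}\| \leq (\kappa_2 + C\kappa_1 \e_n^{p-1}) \e_n$.

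The constraint $\kappa_1 \e^{p-1} \leq \delta^{p-1}$ in (2), together with $\delta < 1$, ensures that the ratios $\e_{n+1}/\e_n$ are bounded by some fixed $r < 1$, so the $\e_n$ decay geometrically (in fact doubly exponentially, since $p > 1$). Summing gives $\sum_n \|\p_n - \p_{n+1}\| \leq \kappa \e$ for some $\kappa = \kappa(\kappa_1, \kappa_2, \delta, p)$, so $(\p_n)$ is Cauchy in the uniform norm over $G$; the limit $\pi := \lim_n \p_n$ takes unitary values (by closedness of $U(\hh)$) and is exactly multiplicative (since $\e_n \to 0$), yielding the unitary representation required by (1). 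The main obstacle will be the rigidity lemma of the preceding paragraph: one needs \emph{linear} control on both $\|u - \psi\|$ and the multiplicativity defect of $u$, since any square-root loss one might naively fear from polar decomposition would destroy the super-linear improvement $\e_n^p$ that makes the iteration converge at all.
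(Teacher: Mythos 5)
Your proposal is correct and follows essentially the same route as the paper: the nontrivial direction is handled by the same polar-decomposition rigidity lemma (unital positive definite plus $\delta$-multiplicative implies $\delta$-unitary, and the polar unitary is within $\delta$ and $O(\delta)$-multiplicative, with linear rather than square-root loss thanks to $|1-t|\leq|1-t^2|$ for $t\geq 0$), followed by the same iteration with doubly exponential decay of the $\e_n$. The only bookkeeping point is that the constant $C$ from the polar correction means the geometric-ratio bound requires taking the threshold in (1) slightly smaller than the $\delta$ of (2), which is harmless since (1) only asserts the existence of some threshold.
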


A map ${\p\colon G\to B(\hh)}$ is said to be \emph{uniformly bounded} if 
\[
{\|{\p}\|:=\sup_{x\in G}\,\|{\p(x)}\|<\infty}.
\]
We let ${\ell^\infty(G,B(\hh))}$ denote the complex vector space of uniformly bounded maps from $G$ to $B(\hh)$. 

A version of Prop.\ \ref{P - Prop 1} for uniformly bounded representations is discussed in \ts \ref{S - ub representations}. These arguments can be use to establish stability (see e.g.\ \cite{bot13} and \cite{shtern13}). Unital positive definite maps arise as follows. 

If the group $G$ is amenable, then every invariant mean $\ell^\infty(G)$ can be extended in a natural way to an invariant mean on $\ell^\infty(G,B(\hh))$. It has been shown by Chiffre, Ozawa, and Thom in \cite[\ts 2]{dcot19} that if $\p\colon G\to B(\hh)$ is a uniformly bounded map and $\IE$ is such a mean, then the map $\psi\colon G\to B(\hh)$ defined by
\[
\psi(x) := \IE_y\p(xy)\p(y)^*, \ \ \ \  x\in G
\]
is positive definite  (see also \cite{shtern99,gh17}).

The following is our main result:

\begin{theorem}\label{T - main}
Let $G$ be a  group and $\hh$ be a Hilbert space. The following are equivalent: 
\begin{enumerate}
\item[(A)] $G$ is amenable;
\item[(B)]  there exists a $B(\hh)$-valued positive semi-definite left-sesquilinear form ${\ip{\cdot,\cdot}}$ on ${\ell^\infty(G, B(\hh))}$ and a positive real number ${\kappa>0}$ such that 
\begin{enumerate}
\item ${\ip{\1,\1}\neq0}$, where $\1\in\ell^\infty(G, B(\hh))$ refers to the constant map equal to $\Id_\hh$ for all $x\in G$;
\item ${\|{\ip{\p,\p}}\|\leq\kappa\,\|{\p}\|^2}$ for all ${\p\in\ell^\infty(G, B(\hh))}$;
\item ${\ip{\lambda(\,\cdot\,)\,\p,\p}}$ is positive definite for all ${\p\in\ell^\infty(G, B(\hh))}$;
\end{enumerate}
 \item[(C)] there exists a state ${\tau\in\mathrm{St}(\ell^\infty(G))}$ such that for every every uniformly bounded map ${\p:G\to B(\hh)}$, there exists a positive definite map ${\psi:G\to B(\hh)}$ such that 
\[
{\Phi(\p(x)^*\,\psi(x))=\tau_y\,\Phi(\p(x)^*\,\p(xy)\,\p(y)^*)}
\]
for all ${x\in G}$ and every normal linear functional ${\Phi\in B(\hh)_*}$.
\end{enumerate}
\end{theorem}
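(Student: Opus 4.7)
The plan is to close the equivalence via the four implications (A)$\Rightarrow$(B), (A)$\Rightarrow$(C), (C)$\Rightarrow$(A) and (B)$\Rightarrow$(A); the last one is the delicate step.

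For the two implications from (A), the Chiffre--Ozawa--Thom construction from the introduction provides the data. Starting from an invariant mean $\IE$ on $\ell^\infty(G)$ extended to $\ell^\infty(G,B(\hh))$ via $\Phi(\IE f):=\IE_y\Phi(f(y))$ for $\Phi\in B(\hh)_*$, I set $\ip{\p,\psi}:=\IE_y\p(y)\psi(y)^*$ for (B) and $\psi_\p(x):=\IE_y\p(xy)\p(y)^*$ with $\tau:=\IE|_{\ell^\infty(G)}$ for (C). Conditions (a) and (b) are immediate with $\kappa=1$, and the equality in (C) follows by commuting $\Phi\in B(\hh)_*$ with $\IE$. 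For condition (c), applying invariance to $\ip{\lambda(x)\p,\p}=\IE_y\p(x^{-1}y)\p(y)^*$ reduces the positive definiteness in $x$ to the classical square-completion $\sum_{i,j}v_i^*\IE_w\p(x_iw)\p(x_jw)^*v_j=\IE_w\|\sum_j\p(x_jw)^*v_j\|^2\geq 0$.

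For (C)$\Rightarrow$(A), the idea is to show that the hypothesis forces $\tau$ itself to be left-invariant. Given $h\in\ell^\infty(G)$, fix $t\in\CI$ with $|t|<1/\|h\|_\infty$ so that $1+th$ is nowhere zero, and apply (C) to $\p(y):=(1+th(y))\Id_\hh$. Since normal functionals separate $B(\hh)$ and $(1+th)(x)\neq 0$, the formula in (C) forces $\psi(x)=g_{1+th}(x)\Id_\hh$ with $g_{1+th}(x):=\tau_y[(1+th(xy))\overline{(1+th(y))}]$. Because $\psi$ is operator positive definite and a scalar multiple of $\Id_\hh$, the function $g_{1+th}$ is a scalar positive definite function, so $g_{1+th}(x^{-1})=\overline{g_{1+th}(x)}$; expanding both sides as polynomials in $t,\bar t$ and matching the coefficient of $t$ gives $\tau_y h(x^{-1}y)=\tau(h)$ for every $h$ and $x$, which is left-invariance of $\tau$.

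The main obstacle is (B)$\Rightarrow$(A), where the plan is to reduce to a scalar invariant functional. Choose a state $\omega$ on $B(\hh)$ with $\omega(\ip{\1,\1})>0$ (possible since $\ip{\1,\1}\geq 0$ is nonzero) and set $B_\omega(f,g):=\omega(\ip{f\1,g\1})$. This is a scalar positive semi-definite sesquilinear form on $\ell^\infty(G)$ with $B_\omega(\1,\1)>0$ and $B_\omega(f,f)\leq\kappa\|f\|_\infty^2$; using $\lambda(x)(f\1)=(\lambda(x)f)\1$, condition (c) transports to the assertion that $x\mapsto B_\omega(\lambda(x)f,f)$ is positive definite for every $f$. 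The crucial step is to deduce from this that $B_\omega$ is fully $\lambda$-invariant: Hermiticity of positive definite scalar functions combined with the self-adjointness of $B_\omega$ gives $B_\omega(\lambda(x^{-1})f,f)=\overline{B_\omega(\lambda(x)f,f)}=B_\omega(f,\lambda(x)f)$, and polarizing this with $f$ replaced by $f+th$ and matching the coefficient of $t$ produces $B_\omega(\lambda(x^{-1})h,f)=B_\omega(h,\lambda(x)f)$ for all $h,f,x$; substituting $h\mapsto\lambda(x)h$ then yields $B_\omega(\lambda(x)h,\lambda(x)f)=B_\omega(h,f)$. With this invariance in hand, $\mu(f):=B_\omega(f,\1)/B_\omega(\1,\1)$ is a bounded, $\lambda$-invariant complex linear functional on $\ell^\infty(G)$ with $\mu(\1)=1$. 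Jordan decomposition in the commutative $C^*$-algebra $\ell^\infty(G)$ produces four positive summands, each $\lambda$-invariant by uniqueness of the decomposition under the $*$-automorphism action of $\lambda$, and the positive part of $\Re\mu$ takes value at least $1$ on $\1$; normalizing it gives the required left-invariant mean, so $G$ is amenable.
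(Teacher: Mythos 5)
Your proposal is correct, and while your forward implications coincide with the paper's (the invariant-mean construction of Chiffre--Ozawa--Thom, exactly as in Prop.~\ref{P - Amenable positive semi-definite} and Prop.~\ref{amenable-crit}), your two converse directions take a genuinely more direct route. For (C)$\Rightarrow$(A) the paper builds the operator-valued form $\ip{\p,\psi}=\Psi_\tau(\p^*\psi)$, checks positive definiteness of its coefficients along $\ell^\infty(G,\GL_1(B(\hh)))$, and then invokes Prop.~\ref{P - Amenable positive semi-definite invertible}; you instead test the hypothesis only on the scalar maps $(1+th)\Id_\hh$, use the Hermitian symmetry $g(x^{-1})=\overline{g(x)}$ of the resulting scalar positive definite function, and linearize in $t$ to conclude that $\tau$ itself is left-invariant --- a shorter argument with a sharper conclusion. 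For (B)$\Rightarrow$(A) the paper polarizes at the operator level (Lemma~\ref{pssf-pos-def-unitary-lemma}, applied to the perturbations $\1+\p$ with $\|\p\|<1$) to obtain $\ip{\lambda(s)u,v}=\ip{u,\lambda(s^{-1})v}$ and scalarizes only at the very end via Lemma~\ref{L - mean-ext}; you scalarize first through a state $\omega$ on $B(\hh)$ and run the same polarization on the scalar form $B_\omega$, which is equivalent but more self-contained, and you also make explicit the Jordan-decomposition step that the paper leaves implicit in Lemma~\ref{L - mean-ext}. The underlying mechanism in both converses --- positive definiteness of coefficients forces a Hermitian symmetry, which linearizes to invariance --- is the same as the paper's. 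One caveat, which you share with the paper: with the stated conventions ($\lambda_x\p=\p(x^{-1}\,\cdot\,)$ and positive definiteness meaning $(\phi(x_i^{-1}x_j))_{ij}\geq0$), the coefficient matrix of $x\mapsto\ip{\lambda(x)\p,\p}$ for a mean-averaged form comes out as the \emph{transpose} of a Gram-type matrix, which over $B(\hh)$ need not be positive; the clean statement is for $\ip{\lambda(\,\cdot\,^{-1})\p,\p}$, as the paper itself signals after Prop.~\ref{P - Amenable positive semi-definite invertible}. This is harmless in your converse arguments (the transpose of a positive scalar matrix is positive, so composing with $\omega$ or restricting to scalar maps washes it out), but you should insert the inverse when verifying condition (c) in your (A)$\Rightarrow$(B) step.
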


\begin{remark}
If $\p$ takes values in a von Neumann algebra $M$, then the map $\psi$ in Condition (C) can be taken to also map into $M$. 

\end{remark}

Conditions (B) and (C) describe ways to construct a positive definite map $\psi$ from the given uniformly bounded map $\p$; the construction method is explicit (although it depends on the axiom of choice in general), through the form $\ip{\cdot,\cdot}$ in (B) or the state $\tau$ in (C). By contrast, the stability condition does not yield a method for constructing the map $\psi$ from $\p$; it only requires that a $\psi$ exists which is close to $\p$ in norm.

Observe that the equality   
\[
{\Phi(\p(x)^*\,\psi(x))=\tau_y\,\Phi(\p(x)^*\,\p(xy)\,\p(y)^*)},\qquad\forall x\in G,\,\Phi\in B(\hh)_*
\]
in Condition (C) implies that $\psi$ is close to $\p$ if the latter is almost multiplicative. However, it is more precise, because it determines $\psi$ uniquely from $\p$ and $\tau$ in many cases. In a sense, (B) and (C) can be viewed as characterizations of the invariance of a mean on $G$ in terms of positive definite maps. 

Accordingly, one can introduce the notion of a ``partially invariant mean'' by requiring that Condition (C) holds only for some subset of ${\ell^\infty(G,B(\hh))}$ for some state $\tau$. For example, the stability condition applies only to almost multiplicative maps and would correspond to a partial invariance condition in this sense.

In \ts\ref{S - restricting to epsilon rep}, we discuss this sort of partial invariance by  restricting Condition (C) to almost multiplicative maps, and deduce the following from the proof of Kazhdan's theorem and the stability theorems established in  \cite{bot13}.

\begin{proposition}\label{P - nonamenable intro}
Suppose that $G$ contains a non-abelian free group. For every state $\tau$ on  $\ell^\infty(G)$, every Hilbert space $\hh$, and small enough $\e>0$, there exists an $\e$-representation ${\p: G\to U(\hh)}$ such that for every positive definite map ${\psi: G\to B(\hh)}$, there exists some normal linear functional ${\Phi\in B(\hh)^*}$ such that 
\[
{\Phi(\p(x)^*\,\psi(x))\neq\tau_y\,\Phi(\p(x)^*\,\p(xy)\,\p(y)^*)}
\]
for some ${x\in G}$.  
\end{proposition}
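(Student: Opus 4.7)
The plan is to argue by contrapositive. Fix a state $\tau$ on $\ell^\infty(G)$ and a Hilbert space $\hh$, and suppose toward a contradiction that for every sufficiently small $\e > 0$ and every $\e$-representation $\p\colon G \to U(\hh)$ there exists a positive definite map $\psi\colon G \to B(\hh)$ satisfying
\[
\Phi(\p(x)^*\,\psi(x)) = \tau_y\,\Phi(\p(x)^*\,\p(xy)\,\p(y)^*)
\]
for all $x \in G$ and all normal $\Phi \in B(\hh)_*$. I will show that the map $\p \mapsto \psi$ then yields, for every such $\p$, a unitary representation of $G$ that is $O(\e)$-close to $\p$, contradicting the failure of Ulam stability for groups containing a non-abelian free group established in \cite{bot13}.

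The first step is to deduce from the displayed identity that $\sup_x \|\psi(x) - \p(x)\| \leq \e$. Since $\p$ is unitary,
\[
\|\p(x)^*\,\p(xy)\,\p(y)^* - \Id\| = \|\p(xy) - \p(x)\,\p(y)\| \leq \e,
\]
so for any normal $\Phi \in B(\hh)_*$, applying $\tau$ (a contractive state) gives $|\Phi(\p(x)^*\,\psi(x) - \Id)| \leq \e\,\|\Phi\|$. Taking the supremum over the unit ball of $B(\hh)_*$ yields $\|\p(x)^*\,\psi(x) - \Id\| \leq \e$, and multiplying by the unitary $\p(x)$ gives $\|\psi(x) - \p(x)\| \leq \e$. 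Specializing the identity to $x = e$ similarly forces $\psi(e) = \Id$, so $\psi$ is unital; and a routine triangle inequality combining this with the $\e$-multiplicativity of $\p$ and the bound $\|\psi(x)\| \leq 1+\e$ shows that $\psi$ is itself $O(\e)$-multiplicative.

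The second step is to upgrade the unital, positive definite, $O(\e)$-multiplicative map $\psi$ (at uniform distance at most $\e$ from $\p$) to a genuine unitary representation $\pi\colon G \to U(\hh)$ with $\|\pi(x) - \p(x)\| \leq C\e$ for a universal constant $C$; this is precisely the content of the implication $(2) \Rightarrow (1)$ of Proposition~\ref{P - Prop 1}, which encapsulates the Kazhdan-style passage from a positive definite almost-representation to a genuine one. Because $G$ contains a non-abelian free group, the stability theorems of \cite{bot13} provide $\delta_0 > 0$ together with unitary $\e$-representations $\p\colon G \to U(\hh)$ for arbitrarily small $\e$ satisfying $\sup_x \|\p(x) - \pi(x)\| \geq \delta_0$ for every unitary representation $\pi\colon G \to U(\hh)$. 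Choosing $\e$ small enough that $C\e < \delta_0$ and applying the construction above to such a $\p$ produces a unitary representation violating this lower bound, yielding the desired contradiction.

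The main obstacle lies in the second step: extracting a unitary representation on $\hh$ itself from the data of $\psi$ with a linear bound in $\e$. The Stinespring dilation $\psi(x) = V^*\,\pi_0(x)\,V$ a priori produces a unitary representation $\pi_0$ on a larger Hilbert space, and the key input is the almost-invariance of $V(\hh)$ under $\pi_0$ — a consequence of the $O(\e)$-multiplicativity of $\psi$ — which allows one to compress $\pi_0$ back to $\hh$ while controlling the error. Careful bookkeeping of constants is then required so that the resulting constant $C$ yields $C\e < \delta_0$ for $\e$ small, bridging the quantitative gap between the qualitative Stinespring picture and the sharp non-stability threshold of \cite{bot13}.
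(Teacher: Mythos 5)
Your overall strategy (contrapositive: the hypothesis forces Ulam stability, contradicting the non-stability of groups containing $F_2$ from \cite{bot13}) is the same as the paper's, and your first step ($\|\p-\psi\|\leq\e$, $\psi(e)=\Id$) matches Remark~\ref{R - p close to psi}. But there is a genuine gap in your second step. You feed into Proposition~\ref{P - Prop 1}(2) a unital positive definite map $\psi$ that you have only shown to be $O(\e)$-multiplicative; condition (2) of that proposition requires a $\kappa_1\e^p$-representation with $p>1$ (and the iteration in its proof, $\e_n=\kappa_1\e_{n-1}^p$, converges only because of this superlinear gain). A triangle-inequality bound of the form $\|\psi(xy)-\psi(x)\psi(y)\|\leq 4\e$ cannot be iterated to produce a genuine representation, so "this is precisely the content of $(2)\Rightarrow(1)$" does not apply; no amount of "careful bookkeeping of constants" in the Stinespring compression will repair a linear estimate.

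The missing idea — which is the heart of Kazhdan's argument and of the paper's proof — is the quadratic improvement. Because $\psi$ is \emph{uniquely determined} by the formula $\psi(x)=(\Psi_\tau)_y\,\p(xy)\,\p(y)^*$ (using invertibility of the unitaries $\p(x)$), one has the algebraic identity
\[
(\Psi_\tau)_y\bigl(\p(xy)-\p(x)\p(y)\bigr)\bigl(\p(xy)-\p(x)\p(y)\bigr)^*=\Id_\hh-\psi(x)\psi(x)^*+(\p(x)-\psi(x))(\p(x)-\psi(x))^*,
\]
which shows that $\psi$ is $2\e^2$-unitary (in fact $\e^2$-unitary by positivity); then Lemma~\ref{P - upd epsilon multiplicative} (the Stinespring inequality for unital positive definite maps, Lemma~\ref{stinespring-ineq}) upgrades this to $2\e^2$-multiplicativity. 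Only with this $p=2$ estimate does the iteration of Proposition~\ref{stability-equiv} close up and yield stability, and hence the contradiction with \cite{bot13}. Without it your argument does not go through.
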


We conclude the paper with a remark on unitarily invariant norms. While the
 classical stability theorems   \cite{GKR,HK,kazhdan82} focus on the operator norm,  several interesting results have been established in more general situations \cite{GKR,gh17,dcot19}, in particular with respect to ultraweakly lower semi-continuous unitary invariant norms. One can deduce (see Prop.\ \ref{P - unitarily invariant norms})  estimates for such norms from Condition (C).    

\bigskip
\bigskip

\noindent\textbf{Acknowledgment.}  The work presented in this paper is part of the M.Sc.\ thesis of the second author. The authors are partially funded by NSERC Discovery Fund 234313. 

\toc

\section{$\delta$-unitary maps on groups}

Let $G$ be a group and $\hh$ be a Hilbert space. 

\begin{definition} Let $\delta\geq 0$.
A map $\p:G\to B(\hh)$ is said to be \emph{$\delta$-unitary} if $\opnorm{\Id_\hh-\p(x)\,\p(x)^*}\leq \delta$ and $\opnorm{\Id_\hh-\p(x)^*\,\p(x)}\leq \delta$ for every $x\in G$.
\end{definition}

The present section contains basic results on $\delta$-unitary maps, in particular in relation with positive definite mappings, representations, and $\e$-representations. While the techniques and computations are elementary and well-known, we provide complete proofs for the convenience of the reader. We recall that a map $\p\colon G\to B(\hh)$ is said to be positive definite if for every finite set $\{x_1,\ldots, x_n\} \subset G$, the operator matrix $(\p(x_i^{-1}x_j))_{ij}$ is positive.

\begin{lemma}\label{P - upd epsilon multiplicative}
Let $\delta\geq 0$ and suppose $\p:G\to B(\hh)$ is unital positive definite. The following are equivalent:
\begin{enumerate}
\item $\p$ is $\delta$-unitary.
\item $\p$ is $\delta$-multiplicative.
\end{enumerate}
\end{lemma}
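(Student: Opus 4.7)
My plan is to use the standard Naimark/Stinespring-type dilation for $B(\hh)$-valued positive definite maps on groups: since $\p$ is unital positive definite, there exists a Hilbert space $\hk\supseteq\hh$ and a unitary representation $\pi\colon G\to U(\hk)$ such that $\p(x)=P_\hh\,\pi(x)_{|\hh}$, where $P_\hh$ denotes the orthogonal projection onto $\hh$. Writing $\pi(x)$ as a block matrix with respect to $\hk=\hh\oplus\hh^\perp$,
\[
\pi(x)=\begin{pmatrix}\p(x) & A(x)\\ B(x) & C(x)\end{pmatrix},
\]
the relations $\pi(x)\,\pi(x)^*=\Id_\hk=\pi(x)^*\,\pi(x)$ read, on the $(1,1)$-block, as
\[
\Id_\hh-\p(x)\,\p(x)^*=A(x)\,A(x)^*,\qquad \Id_\hh-\p(x)^*\,\p(x)=B(x)^*\,B(x).
\]

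For (2)$\impl$(1), I would simply specialise the $\delta$-multiplicative inequality to $y=x^{-1}$ (and to $x=y^{-1}$), invoking the two elementary consequences of unital positive definiteness: $\p(e)=\Id_\hh$ and $\p(x^{-1})=\p(x)^*$ (the latter from positivity of the $2\times 2$ block $(\p(x_i^{-1}x_j))$ with $x_1=e$, $x_2=x$). This yields $\|\Id_\hh-\p(x)\,\p(x)^*\|\leq\delta$ and $\|\Id_\hh-\p(x)^*\,\p(x)\|\leq\delta$ directly.

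For (1)$\impl$(2), the displayed block identities give $\|A(x)\|^2=\|A(x)A(x)^*\|\leq\delta$ and similarly $\|B(y)\|^2\leq\delta$, so $\|A(x)\|,\|B(y)\|\leq\sqrt{\delta}$. Reading off the $(1,1)$-block of the group-law identity $\pi(xy)=\pi(x)\,\pi(y)$ then yields
\[
\p(xy)=\p(x)\,\p(y)+A(x)\,B(y),
\]
from which $\|\p(xy)-\p(x)\,\p(y)\|=\|A(x)\,B(y)\|\leq\sqrt{\delta}\cdot\sqrt{\delta}=\delta$.

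There is no real obstacle here beyond invoking the dilation; the two directions are short once one has $\pi$ in hand. If one prefers to avoid citing Naimark's theorem, an alternative is to work entirely with the positivity of the $2\times 2$ operator matrix $(\p(x_i^{-1}x_j))$ at $(x_1,x_2)=(y^{-1},x)$: its positivity, combined with $\p(e)=\Id$, forces the Schwarz-type inequality $\p(xy)-\p(x)\p(y)=(\Id-\p(x)\p(x)^*)^{1/2}\,T\,(\Id-\p(y)^*\p(y))^{1/2}$ for some contraction $T$, yielding the same bound. I would present the dilation argument since it is cleaner and the setting is standard.
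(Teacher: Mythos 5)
Your proposal is correct and follows essentially the same route as the paper: the direction (2)$\impl$(1) is verbatim the paper's argument (specialise to $y=x^{-1}$ using $\p(e)=\Id_\hh$ and $\p(x^{-1})=\p(x)^*$), and your dilation argument for (1)$\impl$(2) is the block-matrix form of the paper's Lemma~\ref{stinespring-ineq}, where the identity $\p(xy)-\p(x)\p(y)=A(x)B(y)$ with $A(x)A(x)^*=\Id_\hh-\p(x)\p(x)^*$ and $B(y)^*B(y)=\Id_\hh-\p(y)^*\p(y)$ is exactly the factorisation $U^*\pi(x)(\Id_\hk-UU^*)\pi(y)U$ used there. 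The only (harmless) difference is that the paper states the key inequality for general positive definite $\p$ with $\|\p(e)\|\leq 1$ via Stinespring, whereas you use the unital Naimark compression form directly.
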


\begin{proof}
(2) $\impl$ (1): we have
\[
\opnorm{\Id_\hh-\p(x)\,\p(x)^*}=\opnorm{\p(e)-\p(x)\,\p(x^{-1})}\leq \delta
\]
for every $x\in G$, and similarly, $\opnorm{\Id_\hh-\p(x)^*\,\p(x)}\leq \delta$.

(1) $\impl$ (2): This follows from  Lemma \ref{stinespring-ineq}.
\end{proof}

\begin{lemma}\label{stinespring-ineq}
If ${\p:G\to B(\hh)}$ is a positive definite map such that ${\opnorm{\p(e)}\leq1}$, then 
\[\opnorm{\p(xy)-\p(x)\,\p(y)}\leq\opnorm{\p(e)-\p(x)\,\p(x)^*}^{1/2}\,\opnorm{\p(e)-\p(y)^*\,\p(y)}^{1/2}\]
for all ${x,y\in G}$. 
\end{lemma}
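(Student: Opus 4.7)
The plan is to obtain a GNS/Stinespring-type dilation of $\p$ and then reduce the inequality to a norm estimate of a product of two operators, each factor having a norm equal to the square root of one of the two quantities appearing on the right.

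First I would apply the Naimark--Stinespring dilation for positive definite maps on a group: since $\p$ is positive definite, there exist a Hilbert space $\mathcal{K}$, a unitary representation $\pi\colon G\to U(\mathcal{K})$, and a bounded operator $V\colon\hh\to\mathcal{K}$ such that
\[
\p(x)=V^*\pi(x)V\qquad\text{for all }x\in G.
\]
From this presentation one reads off $\p(e)=V^*V$ and $\p(x^{-1})=\p(x)^*$. The hypothesis $\|\p(e)\|\leq 1$ gives $\|V\|^{2}=\|V^{*}V\|\leq 1$, so that $\|VV^*\|\leq 1$ and the operator $P:=\Id_{\mathcal{K}}-VV^*$ is positive, hence admits a positive square root $P^{1/2}$.

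The second step is to insert the resolution $\Id_{\mathcal{K}}=VV^*+P$ between $\pi(x)$ and $\pi(y)$ to obtain
\[
\p(xy)-\p(x)\p(y)=V^*\pi(x)\bigl(\Id_{\mathcal{K}}-VV^*\bigr)\pi(y)V=V^*\pi(x)P^{1/2}\cdot P^{1/2}\pi(y)V.
\]
Submultiplicativity of the operator norm then gives
\[
\|\p(xy)-\p(x)\p(y)\|\leq\|V^*\pi(x)P^{1/2}\|\cdot\|P^{1/2}\pi(y)V\|.
\]

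Finally I would compute each factor using unitarity of $\pi$ (so that $\pi(x)\pi(x)^*=\Id_{\mathcal{K}}$) and the fact that $\|A\|^2=\|AA^*\|=\|A^*A\|$. For the first factor,
\[
\|V^*\pi(x)P^{1/2}\|^{2}=\|V^*\pi(x)P\pi(x)^*V\|=\|V^*V-V^*\pi(x)VV^*\pi(x)^*V\|=\|\p(e)-\p(x)\p(x)^*\|,
\]
and an analogous computation yields $\|P^{1/2}\pi(y)V\|^{2}=\|\p(e)-\p(y)^*\p(y)\|$. Substituting these back gives the claimed inequality.

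The only real subtlety is invoking the dilation, which relies on a standard GNS construction from the matrix-positivity hypothesis; everything after that is bookkeeping with the unitarity of $\pi$ and the identity $\p(x^{-1})=\p(x)^*$. Readers who prefer an entirely elementary argument could instead invoke positivity of the $3\times 3$ operator matrix $(\p(g_i^{-1}g_j))$ at $(g_1,g_2,g_3)=(x^{-1},e,y)$ and extract the inequality via a Cauchy--Schwarz bound, but the dilation route is cleaner and short.
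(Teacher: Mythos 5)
Your proof is correct and follows essentially the same route as the paper: a Stinespring/Naimark dilation $\p(x)=V^*\pi(x)V$, insertion of $\Id_{\mathcal K}-VV^*=P^{1/2}P^{1/2}$ between $\pi(x)$ and $\pi(y)$, submultiplicativity, and the computation of each factor via $\|A\|^2=\|AA^*\|$ and unitarity of $\pi$. The only cosmetic difference is notation ($V$, $P$ versus $U$, $\Id_{\hk}-UU^*$) and that the paper passes through an adjoint before squaring, which changes nothing.
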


\begin{proof}
It follows by Stinespring's dilation theorem that there exists a Hilbert space $\hk$, a bounded operator ${U:\hh\to\hk}$, and a unitary representation ${\pi\colon G\to U(\hk)}$ such that ${\p(x)=U^*\,\pi(x)\,U}$ for all ${x\in G}$. As ${UU^*}$ is a positive operator such that 
\[\opnorm{UU^*}=\opnorm{U^*}^2=\opnorm{U}^2=\opnorm{\p(e)}\leq1\]
it follows that ${\Id_\hk-UU^*}$ is a positive operator, and thus ${(\Id_\hk-UU^*)^{1/2}}$ is well-defined, so 
\begin{align*}
\opnorm{\p(xy)-\p(x)\,\p(y)}&=\opnorm{U^*\,\pi(xy)\,U-U^*\,\pi(x)\,UU^*\,\pi(y)\,U}
\\&=\opnorm{U^*\,\pi(x)\,(\Id_\hk-UU^*)^{1/2}\,(\Id_\hk-UU^*)^{1/2}\,\pi(y)\,U}
\\&\leq\opnorm{U^*\,\pi(x)\,(\Id_\hk-UU^*)^{1/2}}\,\opnorm{(\Id_\hk-UU^*)^{1/2}\,\pi(y)\,U}
\\&=\opnorm{(\Id_\hk-UU^*)^{1/2}\,\pi(x)^*\,U}\,\opnorm{(\Id_\hk-UU^*)^{1/2}\,\pi(y)\,U}
\\&=\opnorm{U^*\,\pi(x)\,(\Id_\hk-UU^*)\,\pi(x)^*\,U}^{1/2}\,\opnorm{U^*\,\pi(y)^*\,(\Id_\hk-UU^*)\,\pi(y)\,U}^{1/2}
\\&=\opnorm{U^*U-U^*\,\pi(x)\,UU^*\,\pi(x)^*\,U}^{1/2}\,\opnorm{U^*U-U^*\,\pi(y)^*\,UU^*\,\pi(y)\,U}^{1/2}
\\&=\opnorm{\p(e)-\p(x)\,\p(x)^*}^{1/2}\,\opnorm{\p(e)-\p(y)^*\,\p(y)}^{1/2}
\end{align*}
for all ${x,y\in G}$. 
\end{proof}

We shall use the following terminology for convenience.
We say that a bounded operator $T\colon \hh\to \hk$ between Hilbert spaces is \emph{$\delta$-isometric}  if $\|\Id_\hh-T^*T\|\leq \delta$, and \emph{$\delta$-unitary} if $T$ and $T^*$ are $\delta$-isometric; a map $\p\colon G\to B(\hh)$ is $\delta$-isometric if $\p(x)$ is $\delta$-isometric for every $x\in G$. We define  $\p^*\colon G\to B(\hh)$ by $\p^*(x):=\p(x)^*$ for all $x\in G$; clearly, $\p$ is $\delta$-unitary if and only if $\p$ and $\p^*$ are $\delta$-isometric if and only if $\p(x)$ is $\delta$-unitary for every $x\in G$; if $\p$ is positive definite, then $\p^*(x)=\p(x^{-1})$ for all $x\in G$.

\begin{lemma}
\label{P - upd epsilon isometric}
Let $\delta\geq 0$ and suppose $\p:G\to B(\hh)$ is  positive definite. The following are equivalent:
\begin{enumerate}
\item $\p$ is $\delta$-unitary.
\item $\p$ is $\delta$-isometric.
\end{enumerate}
\end{lemma}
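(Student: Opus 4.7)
My plan is to observe that one direction is immediate and then reduce the other to a relabelling of the group element.

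The implication (1) $\Rightarrow$ (2) requires no argument beyond unpacking definitions: being $\delta$-unitary is defined as the conjunction of $\|\Id_\hh - \p(x)\,\p(x)^*\| \leq \delta$ and $\|\Id_\hh - \p(x)^*\,\p(x)\| \leq \delta$ for all $x \in G$, while being $\delta$-isometric is just the second of these conditions. So the content of the lemma is in the converse (2) $\Rightarrow$ (1).

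For (2) $\Rightarrow$ (1), the key input is the identity $\p^*(x) = \p(x^{-1})$ for positive definite $\p$, recalled in the paragraph immediately before the lemma. Using this, I will rewrite
\[
\p(x)\,\p(x)^* \;=\; \p(x)\,\p(x^{-1}) \;=\; \p(x^{-1})^*\,\p(x^{-1}),
\]
so that the quantity $\|\Id_\hh - \p(x)\,\p(x)^*\|$ coincides with $\|\Id_\hh - \p(x^{-1})^*\,\p(x^{-1})\|$. Since the $\delta$-isometric hypothesis is assumed at every group element and $G$ is closed under inverses, applying it at $x^{-1}$ yields the bound $\leq \delta$, which is precisely the missing half of the $\delta$-unitary condition.

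The hardest part is really just noticing that the standard Hermitian identity for positive definite maps converts a left-sided bound into a right-sided one via substitution $x \mapsto x^{-1}$; no dilation theory or inequality manipulation is needed beyond what is already recorded in the excerpt. The argument is therefore a one-line computation and can be written out in two or three lines in the final proof.
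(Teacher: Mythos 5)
Your argument is correct and is exactly the route the paper intends: the lemma is stated without proof immediately after the remark that $\p^*(x)=\p(x^{-1})$ for positive definite $\p$, and your substitution $x\mapsto x^{-1}$ via $\p(x)\,\p(x)^*=\p(x^{-1})^*\,\p(x^{-1})$ is the one-line justification the authors leave implicit. No gap; nothing further is needed.
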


\begin{lemma}\label{L - invertible eps isom}
Let $\delta\geq 0$ and suppose that $T\colon \hh\to \hk$ is a bounded operator between Hilbert spaces. Then
\begin{enumerate}
\item If $T$ is $\delta$-isometric, then $1-\delta\leq \|T\|^2\leq 1+\delta$.
\item If $T$ is invertible, $\|T\|^2\leq 1+\delta$, and $\|T^{-1}\|^2\leq 1+\delta$, then $T$ is $\delta$-unitary.
\end{enumerate}
\end{lemma}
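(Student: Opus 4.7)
The plan is to use the standard fact $\|T\|^2=\|T^*T\|$ together with spectral analysis of the positive self-adjoint operator $T^*T$ (and similarly $TT^*$).

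For part (1), since $T^*T$ is self-adjoint, $\opnorm{\Id_\hh-T^*T}$ equals $\sup_{\|v\|=1}|1-\langle T^*Tv,v\rangle|$. The hypothesis $\opnorm{\Id_\hh-T^*T}\leq\delta$ thus gives $1-\delta\leq\langle T^*Tv,v\rangle\leq 1+\delta$ for every unit vector $v$. Taking the supremum over $v$ and using $\|T\|^2=\|T^*T\|=\sup_{\|v\|=1}\langle T^*Tv,v\rangle$ (valid since $T^*T\geq 0$) yields both inequalities at once.

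For part (2), I would first show that $\sigma(T^*T)\subseteq[1/(1+\delta),1+\delta]$, and then deduce the $\delta$-unitary estimate by functional calculus. The upper bound on the spectrum comes immediately from $\|T^*T\|=\|T\|^2\leq 1+\delta$. For the lower bound, invertibility of $T$ implies invertibility of $T^*T$, with $(T^*T)^{-1}=T^{-1}(T^{-1})^*$, so
\[
\opnorm{(T^*T)^{-1}}=\opnorm{T^{-1}(T^{-1})^*}=\opnorm{T^{-1}}^2\leq 1+\delta,
\]
which places $\sigma(T^*T)$ above $1/(1+\delta)$. Then $\sigma(\Id_\hh-T^*T)\subseteq[-\delta,\,\delta/(1+\delta)]\subseteq[-\delta,\delta]$, giving $\opnorm{\Id_\hh-T^*T}\leq\delta$. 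The exact same argument applied to $TT^*$ (using that $(TT^*)^{-1}=(T^{-1})^*T^{-1}$, so $\opnorm{(TT^*)^{-1}}=\opnorm{T^{-1}}^2\leq 1+\delta$) yields $\opnorm{\Id_\hh-TT^*}\leq\delta$, completing the proof that $T$ is $\delta$-unitary.

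There is no real obstacle here; the only subtlety worth checking is the elementary computation $1-1/(1+\delta)=\delta/(1+\delta)\leq\delta$, which ensures that the positive end of the spectrum of $\Id_\hh-T^*T$ does not exceed $\delta$. Everything else is a direct application of the C*-identity $\|T\|^2=\|T^*T\|$ and the spectral theorem for bounded self-adjoint operators.
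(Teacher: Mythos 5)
Your proof is correct and follows essentially the same route as the paper's: both arguments reduce to the C$^*$-identity $\|T\|^2=\|T^*T\|$ and to locating $\mathrm{Sp}(T^*T)$ (and $\mathrm{Sp}(TT^*)$) inside $[(1+\delta)^{-1},1+\delta]$, using the identity $(T^*T)^{-1}=T^{-1}(T^{-1})^*$ — which the paper uses in the equivalent form $1=\|T^{-1}(T^{-1})^*T^*T\xi\|\le(1+\delta)\|T^*T\xi\|$ — before passing to $\|\Id-T^*T\|\le\delta$ via self-adjointness. The only difference is cosmetic (numerical range versus triangle inequality in part (1), and $\|(T^*T)^{-1}\|$ versus a vector-wise lower bound in part (2)).
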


\begin{proof}
(1) We have
\begin{align*}
\|T\|^2=\|T^*T\| = \|\Id_\hh- (\Id_\hh-T^*T)\|&\leq 1+\delta\\
&\geq 1-\delta.
\end{align*}
(2) For every unit vector $\xi\in \hh$ we have
\[
1=\|T^{-1}(T^{-1})^*T^*T\xi\|\leq (1+\delta)\|T^*T\xi\|\leq (1+\delta)^2
\]
therefore $\|T^*T\xi\|\in [(1+\delta)^{-1},1+\delta]\subset[1-\delta,1+\delta]$. This implies that $\mathrm{Sp}(\Id_\hh-T^*T)\subset [-\delta,\delta]$, so $T$ is $\delta$-isometric. Exchanging the role of $T$ and $T^*$, it follows identically that $T^*$ is $\delta$-isometric. Therefore, $T$ is $\delta$-unitary.
\end{proof}

\begin{lemma}
\label{P - upd epsilon isometric}
Let $\delta\geq 0$ and suppose $\pi\colon G\to B(\hh)$ is a uniformly bounded representation of $G$ on $\hh$. The following are equivalent:
\begin{enumerate}
\item $\|\pi\|^2\leq 1+\delta$.
\item $\pi$ is $\delta$-unitary.
\item $\pi$ is $\delta$-isometric.
\end{enumerate}
\end{lemma}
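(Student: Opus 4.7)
The plan is to prove the three implications $(2)\Rightarrow(3)\Rightarrow(1)\Rightarrow(2)$, applying Lemma \ref{L - invertible eps isom} pointwise and exploiting the fact that a representation satisfies $\pi(x^{-1})=\pi(x)^{-1}$.

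First, $(2)\Rightarrow(3)$ is immediate from the definitions: if $\pi(x)$ is $\delta$-unitary for every $x\in G$, then in particular $\pi(x)$ is $\delta$-isometric for every $x\in G$, so $\pi$ is $\delta$-isometric.

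Next, I would use Lemma \ref{L - invertible eps isom}(1) to obtain $(3)\Rightarrow(1)$: assuming $\pi$ is $\delta$-isometric, each $\pi(x)$ satisfies $\|\pi(x)\|^2\leq 1+\delta$, and taking the supremum over $x\in G$ yields $\|\pi\|^2\leq 1+\delta$.

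For the main implication $(1)\Rightarrow(2)$, the key observation is that since $\pi$ is a representation, $\pi(x)$ is invertible with inverse $\pi(x^{-1})$ for every $x\in G$. Assuming $\|\pi\|^2\leq 1+\delta$, we get both $\|\pi(x)\|^2\leq 1+\delta$ and $\|\pi(x)^{-1}\|^2=\|\pi(x^{-1})\|^2\leq 1+\delta$. Lemma \ref{L - invertible eps isom}(2) then guarantees that each $\pi(x)$ is $\delta$-unitary, so $\pi$ is $\delta$-unitary.

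There is no real obstacle here: the lemma is a purely pointwise consequence of Lemma \ref{L - invertible eps isom} once one uses that a homomorphism into $B(\hh)$ automatically sends every group element to an invertible operator whose inverse is again in the image of $\pi$ and hence has norm controlled by $\|\pi\|$. The one small point worth flagging is that uniform boundedness plays no role in the first two implications; it is only needed (implicitly, through the finiteness of $\|\pi\|$) to make condition (1) a meaningful statement at all.
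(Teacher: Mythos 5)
Your proof is correct and matches the paper's approach: the paper simply states that the lemma ``is a direct consequence of Lemma~\ref{L - invertible eps isom}'', and your cycle $(2)\Rightarrow(3)\Rightarrow(1)\Rightarrow(2)$, using that $\pi(x)^{-1}=\pi(x^{-1})$ lies in the image of $\pi$, is exactly the intended pointwise application of that lemma.
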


\begin{proof}
This is a direct consequence of Lemma \ref{L - invertible eps isom}.
\end{proof}

For the remainder of this section we fix nonnegative real numbers $\e,\delta,\eta\geq 0$ and bounded maps ${\p,\,\psi:G\to B(\hh)}$.

\begin{lemma}\label{L - isom-prox} If $\p$ is $\delta$-isometric and $\|\p-\psi\|\leq\eta$, then $\psi$ is ${(\delta+(\|\p\|+\|\psi\|)\eta)}$-isometric. 
\end{lemma}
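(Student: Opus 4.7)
The plan is a direct norm estimate for each $x \in G$, based on the algebraic identity that compares $\Id_\hh - \psi(x)^*\psi(x)$ with $\Id_\hh - \p(x)^*\p(x)$. Concretely, for each $x \in G$ I would add and subtract $\p(x)^*\psi(x)$ to write
\[
\Id_\hh - \psi(x)^*\psi(x) = \bigl(\Id_\hh - \p(x)^*\p(x)\bigr) + \p(x)^*\bigl(\p(x)-\psi(x)\bigr) + \bigl(\p(x)^*-\psi(x)^*\bigr)\psi(x).
\]

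Then I would apply the triangle inequality and submultiplicativity of the operator norm term by term. The first summand has norm at most $\delta$ by the assumption that $\p$ is $\delta$-isometric. The second summand is bounded by $\|\p(x)^*\|\,\|\p(x)-\psi(x)\| \leq \|\p\|\,\eta$, using that $\|\p-\psi\| \leq \eta$. The third is similarly bounded by $\|\p(x)-\psi(x)\|\,\|\psi(x)\| \leq \eta\,\|\psi\|$, since taking adjoints preserves the operator norm. Summing these contributions yields
\[
\|\Id_\hh - \psi(x)^*\psi(x)\| \leq \delta + (\|\p\|+\|\psi\|)\,\eta,
\]
and since this holds uniformly in $x \in G$, the map $\psi$ is $(\delta + (\|\p\|+\|\psi\|)\eta)$-isometric as claimed.

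There is no genuine obstacle here; the result is a routine perturbative estimate. The only small choice is which algebraic identity to use: one could equivalently write $\p(x)^*\p(x) - \psi(x)^*\psi(x) = (\p(x)-\psi(x))^*\p(x) + \psi(x)^*(\p(x)-\psi(x))$, which leads to the same bound after the triangle inequality. The symmetric form chosen above has the mild advantage of making the roles of $\|\p\|$ and $\|\psi\|$ transparent.
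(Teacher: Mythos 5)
Your proposal is correct and uses exactly the same decomposition as the paper: inserting $\p(x)^*\p(x)$ and $\p(x)^*\psi(x)$, bounding the three resulting terms by $\delta$, $\|\p\|\eta$, and $\eta\|\psi\|$ respectively. Nothing further to add.
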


\begin{proof}
We have
\begin{align*}
\opnorm{\Id_\hh-\psi(x)^*\,\psi(x)}&\leq\opnorm{\Id_\hh-\p(x)^*\,\p(x)}+\opnorm{\p(x)^*\,\p(x)-\p(x)^*\,\psi(x)}\\
&\ \ \ \ \ \ \ \ \ +\opnorm{\p(x)^*\,\psi(x)-\psi(x)^*\,\psi(x)}
\\&\leq\opnorm{\p(x)^*\,\p(x)-\p(x)^*\,\psi(x)}+\opnorm{\p(x)^*\,\psi(x)-\psi(x)^*\,\psi(x)}+\delta
\\&\leq\opnorm{\p(x)^*}\,\opnorm{\p(x)-\psi(x)}+\opnorm{\p(x)^*-\psi(x)^*}\,\opnorm{\psi(x)}+\delta
\\&=\opnorm{\p(x)}\,\opnorm{\p(x)-\psi(x)}+\opnorm{\p(x)-\psi(x)}\,\opnorm{\psi(x)}+\delta
\\&\leq\|{\p}\|\,\opnorm{\p(x)-\psi(x)}+\opnorm{\p(x)-\psi(x)}\,\|{\psi}\|+\delta
\\&\leq\left(\|{\p}\|+\|{\psi}\|\right)\eta+\delta
\end{align*}
for all ${x\in G}$. 
\end{proof}

\begin{lemma}\label{L - unitary-prox}
If $\p$ is $\delta$-unitary and $\|\p-\psi\|\leq\e$, then $\psi$ is ${(
\delta+(\|\p\|+\|\psi\|)\e)}$-unitary.
\end{lemma}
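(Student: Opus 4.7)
The statement is the ``two-sided'' version of the previous lemma (Lemma \ref{L - isom-prox}), so the natural plan is simply to apply that lemma twice, once to $\p$ and once to $\p^*$, and combine.

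Recall that $\delta$-unitary means $\p(x)$ is $\delta$-isometric \emph{and} $\p(x)^*$ is $\delta$-isometric for every $x\in G$, i.e.\ both $\p$ and $\p^*$ are $\delta$-isometric as maps $G\to B(\hh)$. So first I would observe that the hypothesis ``$\p$ is $\delta$-unitary'' unpacks into two separate $\delta$-isometric hypotheses: one on $\p$ and one on $\p^*$.

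Next I would check the compatibility of the hypothesis $\|\p-\psi\|\leq \e$ with taking adjoints. Since $\|T\|=\|T^*\|$ for bounded operators, we have $\|\p^*(x)-\psi^*(x)\|=\|(\p(x)-\psi(x))^*\|=\|\p(x)-\psi(x)\|$ for all $x\in G$, so $\|\p^*-\psi^*\|\leq \e$ as well. Similarly $\|\p^*\|=\|\p\|$ and $\|\psi^*\|=\|\psi\|$.

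Now I apply Lemma \ref{L - isom-prox} with $\eta=\e$ to $(\p,\psi)$ to conclude that $\psi$ is $(\delta+(\|\p\|+\|\psi\|)\e)$-isometric, and then apply the same lemma to $(\p^*,\psi^*)$ to conclude that $\psi^*$ is $(\delta+(\|\p^*\|+\|\psi^*\|)\e)=(\delta+(\|\p\|+\|\psi\|)\e)$-isometric. By the definition of $\delta$-unitary recalled in the paragraph following Lemma \ref{stinespring-ineq}, this is exactly what it means for $\psi$ to be $(\delta+(\|\p\|+\|\psi\|)\e)$-unitary, completing the proof. There is no real obstacle here; the only thing to be careful about is matching the norm bookkeeping, i.e.\ that $\|\p^*\|+\|\psi^*\|=\|\p\|+\|\psi\|$, so that the same constant appears in both estimates and we can take their maximum without loss.
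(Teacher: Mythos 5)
Your proof is correct and is essentially identical to the paper's: the authors also deduce the lemma from Lemma \ref{L - isom-prox} applied to $\p$ and to $\p^*$, using $\|\p^*-\psi^*\|=\|\p-\psi\|$. Your extra bookkeeping that $\|\p^*\|+\|\psi^*\|=\|\p\|+\|\psi\|$ is a fine (and correct) explicit check of what the paper leaves implicit.
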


\begin{proof} This follows from Lemma \ref{L - isom-prox} and the fact that $\|\p^*-\psi^*\|=\|\p-\psi\|$. 
\end{proof}

\begin{lemma}\label{double-almost-prox}
If $\p$ is a $\delta$-unitary $\e$-representation  and $\|\p-\psi\|\leq\eta$, then $\psi$ is a ${(\delta+(\|\p\|+\|\psi\|)\,\eta)}$-unitary ${(\e+(1+\|\p\|+\|\psi\|)\,\eta)}$-representation. 
\end{lemma}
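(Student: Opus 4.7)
The plan is to treat the two conclusions separately, as they correspond to the two qualifiers ``$\delta$-unitary'' and ``$\e$-representation''. The first conclusion, that $\psi$ is $(\delta+(\|\p\|+\|\psi\|)\eta)$-unitary, is immediate from Lemma \ref{L - unitary-prox} applied with $\e$ replaced by $\eta$, since the hypothesis gives exactly that $\p$ is $\delta$-unitary and $\|\p-\psi\|\leq\eta$. No further computation is needed for this part.

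For the second conclusion, the approach is a single triangle inequality splitting the defect $\psi(xy)-\psi(x)\psi(y)$ into three pieces by inserting $\p(xy)$ and $\p(x)\p(y)$. Concretely, I would bound
\[
\|\psi(xy)-\psi(x)\psi(y)\| \leq \|\psi(xy)-\p(xy)\| + \|\p(xy)-\p(x)\p(y)\| + \|\p(x)\p(y)-\psi(x)\psi(y)\|.
\]
The first term is at most $\eta$ by the proximity hypothesis, and the middle term is at most $\e$ since $\p$ is an $\e$-representation. For the third term, a further insertion of $\p(x)\psi(y)$ (or equivalently $\psi(x)\p(y)$) together with submultiplicativity of the operator norm yields a bound of $\|\p\|\eta+\|\psi\|\eta$. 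Summing gives $\e+(1+\|\p\|+\|\psi\|)\eta$, as required.

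No step is really an obstacle here: the argument is a routine telescoping estimate, entirely parallel in structure to the computation carried out in the proof of Lemma \ref{L - isom-prox}, the only difference being that one inserts a product of $\p$-values rather than a single $\p(x)^*\p(x)$. The only mild care needed is keeping track of which factor absorbs $\|\p\|$ versus $\|\psi\|$ when splitting the cross term $\p(x)\p(y)-\psi(x)\psi(y)$; the stated constant $1+\|\p\|+\|\psi\|$ reflects the choice of intermediate term $\p(x)\psi(y)$, which is the natural one.
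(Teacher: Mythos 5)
Your proposal is correct and matches the paper's proof essentially verbatim: the unitarity part is read off from Lemma \ref{L - unitary-prox}, and the multiplicativity defect is telescoped through $\p(xy)$ and $\p(x)\p(y)$, with the cross term split via the intermediate $\p(x)\psi(y)$ to produce the constant $1+\|\p\|+\|\psi\|$.
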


\begin{proof}
By Lemma \ref{L - unitary-prox}, $\psi$ is ${(\delta+(\|\p\|+\|\psi\|)\,\eta)}$-unitary. Furthermore, it follows that 
\begin{align*}
\opnorm{\psi(xy)-\psi(x)\,\psi(y)}&\leq\opnorm{\psi(xy)-\p(xy)}+\opnorm{\p(xy)-\p(x)\,\p(y)}\\
&\ \ \ \ \ \ \ \ \ +\opnorm{\p(x)\,\p(y)-\psi(x)\,\psi(y)}
\\&<\opnorm{\p(x)\,\p(y)-\psi(x)\,\psi(y)}+\e+\eta
\\&\leq\opnorm{\p(x)\,\p(y)-\p(x)\,\psi(y)}+\opnorm{\p(x)\,\psi(y)-\psi(x)\,\psi(y)}+\e+\eta
\\&\leq\opnorm{\p(x)}\,\opnorm{\p(y)-\psi(y)}+\opnorm{\p(x)-\psi(x)}\,\opnorm{\psi(y)}+\e+\eta
\\&\leq\|\p\|\,\opnorm{\p(y)-\psi(y)}+\opnorm{\p(x)-\psi(x)}\,\|\psi\|+\e+\eta
\\&\leq\left(1+\|\p\|+\|\psi\|\right)\eta+\e
\end{align*}
for all ${x,y\in G}$, and thus $\psi$ is ${(\e+(1+\|\p\|+\|\psi\|)\,\eta))}$-multiplicative. 
\end{proof}

If ${\p,\psi:G\to B(\hh)}$ we write ${\psi\prec\p}$ if ${\psi(s)}$ belongs to the von Neumann algebra generated by ${\p(G)}$ for all ${s\in G}$. This is a preorder on ${\ell^\infty(G,B(\hh))}$.

We  shall require the following lemma (compare the proof  Theorem 3.1 of \cite{dcot19}).

\begin{lemma}\label{L - invertible-unitary-almost-rep-approx} If ${\p\colon G\to \GL(\hh)}$ is a $\delta$-unitary $\e$-representation, then there exists an ${(\e+(1+\|\p\|+\|\psi\|)\,\delta)}$-representation ${\psi:G\to U(\hh)}$  such that 
${\|{\p-\psi}\|\leq\delta}$.
Furthermore, ${\psi\prec\p}$. 
\end{lemma}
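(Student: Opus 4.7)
The plan is to construct $\psi$ via the polar decomposition of $\p(x)$ and then invoke Lemma~\ref{double-almost-prox} to control the multiplicative defect. More precisely, since $\p(x)\in\GL(\hh)$, the operator $\p(x)^*\p(x)$ is positive and invertible, so one may define
\[
\psi(x) \egdef \p(x)\,(\p(x)^*\p(x))^{-1/2}, \qquad x\in G.
\]
A direct computation shows $\psi(x)^*\psi(x)=\Id_\hh$, and since $\p(x)$ is invertible, $\psi(x)$ is surjective, hence unitary. This also immediately gives $\psi\prec\p$, since each $\psi(x)$ is obtained from $\p(x)$ and $\p(x)^*$ by continuous functional calculus, so $\psi(x)$ belongs to the von Neumann algebra generated by $\p(G)$.

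The next step is to show $\|\p-\psi\|\leq\delta$. Writing $|\p(x)|\egdef(\p(x)^*\p(x))^{1/2}$, one has $\p(x)-\psi(x)=\psi(x)(|\p(x)|-\Id_\hh)$, so $\|\p(x)-\psi(x)\|=\||\p(x)|-\Id_\hh\|$ because $\psi(x)$ is unitary. Factoring
\[
|\p(x)|^2 - \Id_\hh = (|\p(x)|-\Id_\hh)(|\p(x)|+\Id_\hh),
\]
and noting that $|\p(x)|+\Id_\hh\geq\Id_\hh$ so its inverse has norm at most $1$, one deduces
\[
\||\p(x)|-\Id_\hh\|\leq\|\p(x)^*\p(x)-\Id_\hh\|\leq\delta,
\]
using that $\p$ is $\delta$-isometric.

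Finally, the multiplicative estimate is obtained by applying Lemma~\ref{double-almost-prox} to the pair $(\p,\psi)$ with $\eta=\delta$: since $\p$ is a $\delta$-unitary $\e$-representation and $\|\p-\psi\|\leq\delta$, the lemma yields that $\psi$ is an $(\e+(1+\|\p\|+\|\psi\|)\,\delta)$-representation, which is exactly the bound claimed.

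No step looks like a serious obstacle. The only point requiring a small amount of care is the inequality $\||\p(x)|-\Id_\hh\|\leq\|\p(x)^*\p(x)-\Id_\hh\|$, for which it is important to use the factorization above rather than a crude spectral estimate, so that the bound is linear (not $\sqrt{\delta}$) in $\delta$; this linearity is what makes the final multiplicativity constant match the statement.
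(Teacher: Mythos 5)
Your proof is correct and follows essentially the same route as the paper: the unitary $\psi(x)=\p(x)(\p(x)^*\p(x))^{-1/2}$ is exactly the unitary part $U_x$ of the polar decomposition used in the paper, the bound $\|\,|\p(x)|-\Id_\hh\|\leq\|\,|\p(x)|^2-\Id_\hh\|$ is the paper's inequality $|1-t|\leq|1-t^2|$ (you just prove it via the factorization instead of quoting functional calculus), and the final step is the same appeal to Lemma~\ref{double-almost-prox}.
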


\begin{proof}
 Let ${\p(x)=U_x\,|{\p(x)}|}$ be the polar decomposition of ${\p(x)}$; as ${\p(x)}$ is invertible, ${U_x}$ can be taken to be unitary. Let ${\psi:G\to U(\hh)}$ be the unitary map defined by ${\psi(x)=U_x}$. As ${|{\p(x)}|}$ is a positive operator for all ${x\in G}$ and ${|{1-t}|\leq|{1-t^2}|}$ for all ${t\in[0,\infty)}$,  we have 
\[
\opnorm{\Id_\hh-|{\p(x)}|}\leq\opnorm{\Id_\hh-|{\p(x)}|^2}
\]
 therefore  
\begin{align*}
\opnorm{\psi(x)-\p(x)}&=\opnorm{\psi(x)-\psi(x)\,|{\p(x)}|}
\\&=\opnorm{\Id_\hh-|{\p(x)}|}
\\&\leq\opnorm{\Id_\hh-|{\p(x)}|^2}
\\&=\opnorm{\Id_\hh-\p(x)^*\,\p(x)}\leq\delta
\end{align*}
for all ${x\in G}$. It follows by Lemma \ref{double-almost-prox} that $\psi$ is an ${(\e+(1+\|\p\|+\|\psi\|)\delta)}$-representation. 
\end{proof}

We also remark (following \cite{dcot19}) that the inequality 
\[
\opnorm{\Id_\hh-|{\p(x)}|}\leq\opnorm{\Id_\hh-|{\p(x)}|^2},
\] 
used in the previous lemma for the operator norm, holds in fact for an arbitrary unitarily invariant norm:

\begin{lemma}\label{square-ineq}
Let $\hh$ be a separable Hilbert space and $\|\cdot\|$ be a unitarily invariant norm on ${B(\hh)}$. If ${T\in B(\hh)}$ is a positive operator, then 
\[
{\|{\Id_\hh-T}\|\leq\|{\Id_\hh-T^2}}\|
\]
\end{lemma}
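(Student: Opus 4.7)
The plan is to factor $\Id_\hh - T^2 = (\Id_\hh - T)(\Id_\hh + T)$ and invert the factor $\Id_\hh + T$, which is well-behaved because $T$ is positive.

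First I would observe that since $T \geq 0$, its spectrum lies in $[0,\infty)$, so the spectrum of $\Id_\hh + T$ lies in $[1,\infty)$. In particular $\Id_\hh + T$ is invertible and $(\Id_\hh + T)^{-1}$ is a positive operator with spectrum in $(0,1]$, so $\opnorm{(\Id_\hh + T)^{-1}} \leq 1$ in the operator norm. Since $T$ commutes with itself, the identity
\[
\Id_\hh - T^2 = (\Id_\hh - T)(\Id_\hh + T)
\]
holds, and multiplying both sides on the right by $(\Id_\hh + T)^{-1}$ gives
\[
\Id_\hh - T = (\Id_\hh - T^2)(\Id_\hh + T)^{-1}.
\]

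Next I would invoke the standard property that any unitarily invariant norm $\|\cdot\|$ on $B(\hh)$ satisfies the bimodule estimate $\|AXB\| \leq \opnorm{A}\,\|X\|\,\opnorm{B}$ for all $A,B \in B(\hh)$ and all $X$ in the domain of $\|\cdot\|$. Applied to the factorization above with $A = \Id_\hh$, $X = \Id_\hh - T^2$, and $B = (\Id_\hh + T)^{-1}$, this yields
\[
\|\Id_\hh - T\| \;\leq\; \|\Id_\hh - T^2\|\cdot \opnorm{(\Id_\hh + T)^{-1}} \;\leq\; \|\Id_\hh - T^2\|,
\]
which is the claimed inequality.

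The only nontrivial step is the bimodule estimate for unitarily invariant norms, which is classical in the Schatten/Ky Fan setting and extends to the ultraweakly lower semi-continuous unitarily invariant norms on $B(\hh)$ relevant to the paper; I would cite this as a standard fact rather than reprove it. One could alternatively give a spectral-calculus proof: since $T \geq 0$ and $\hh$ is separable, the functional calculus represents $T$ as multiplication by a nonnegative measurable function on some $L^2$-space, and the pointwise inequality $|1 - t| \leq |1 - t^2|$ for $t \geq 0$ combined with the monotonicity of unitarily invariant norms under the pointwise order on positive operators gives the result directly. Either route is short; the main conceptual point is the use of positivity of $T$ to control $(\Id_\hh + T)^{-1}$.
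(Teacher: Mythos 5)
Your proof is correct, but it takes a genuinely different route from the paper's. The paper's argument is the functional-calculus one you sketch only as an alternative at the end: from the scalar inequality $|1-t|\leq|1-t^2|$ on $[0,\infty)$ one gets $0\leq|\Id_\hh-T|\leq|\Id_\hh-T^2|$ as operators, and then one invokes the monotonicity of unitarily invariant norms with respect to the order on positive operators (Proposition 2.6 of de Chiffre--Ozawa--Thom) together with $\|S\|=\|\,|S|\,\|$. Your primary argument instead factors $\Id_\hh-T^2=(\Id_\hh-T)(\Id_\hh+T)$, uses positivity of $T$ to see that the operator norm of $(\Id_\hh+T)^{-1}$ is at most $1$, and concludes from the bimodule estimate for unitarily invariant norms. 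Each route rests on exactly one standard structural fact about such norms: yours on the ideal property $\|AXB\|\leq \|A\|_{\mathrm{op}}\,\|X\|\,\|B\|_{\mathrm{op}}$ (which follows from unitary invariance alone via a Russo--Dye convexity argument and is arguably the more self-contained of the two), the paper's on order-monotonicity on positive operators, which it outsources to the cited reference. Your factorization also avoids the functional calculus entirely; note that neither argument actually uses the separability of $\hh$.
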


\begin{proof}
As ${|{1-t}|\leq|{1-t^2}}|$ for all ${t\in[0,\infty)}$, it follows   that 
\[
{\|{\Id_\hh-T}\|=\|{|{\Id_\hh-T}}|\|\leq\|{|{\Id_\hh-T^2}}|\|=\|{\Id_\hh-T^2}}\|
\]
by Proposition 2.6 in \cite{dcot19}.
\end{proof}

\begin{lemma}\label{unitary-almost-rep-approx} Suppose $\delta<1$. If ${\p\colon G\to B(\hh)}$ is a $\delta$-unitary $\e$-representation, then there exists an ${(\e+4\delta)}$-representation ${\psi:G\to U(\hh)}$  such that 
${\|{\p-\psi}\|\leq\delta}$.
Furthermore, ${\psi\prec\p}$. 
\end{lemma}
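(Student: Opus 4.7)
The plan is to reduce this to Lemma \ref{L - invertible-unitary-almost-rep-approx}, using the hypothesis $\delta<1$ to upgrade the partial isometry in a polar decomposition to a genuine unitary. The point is that $\delta$-unitarity with $\delta<1$ forces $\p(x)$ to be invertible for every $x\in G$, so that the polar decomposition $\p(x)=U_x\,|\p(x)|$ has $U_x$ unitary.

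More precisely, the first step is to observe that $\|\Id_\hh-\p(x)^*\,\p(x)\|\leq\delta<1$ implies $\mathrm{Sp}(\p(x)^*\,\p(x))\subset[1-\delta,1+\delta]$, so that $\p(x)^*\,\p(x)$ is invertible; in particular $\p(x)$ is bounded below, hence injective with closed range. Symmetrically, $\p(x)\,\p(x)^*$ is invertible, so $\p(x)^*$ is injective and $\p(x)$ has dense range. Combining the two, $\p(x)$ is bijective for every $x\in G$, that is, $\p$ takes values in $\GL(\hh)$.

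Next I would apply Lemma \ref{L - invertible-unitary-almost-rep-approx} to $\p\colon G\to \GL(\hh)$. This immediately yields a unitary map $\psi\colon G\to U(\hh)$ with $\psi\prec\p$ and $\|\p-\psi\|\leq\delta$, such that $\psi$ is an $(\e+(1+\|\p\|+\|\psi\|)\delta)$-representation. It only remains to convert the constant $(1+\|\p\|+\|\psi\|)$ into the clean bound $4$. Since $\psi$ is unitary-valued, $\|\psi\|=1$. Since $\p$ is $\delta$-isometric, Lemma \ref{L - invertible eps isom}(1) gives $\|\p(x)\|^2\leq 1+\delta<2$, so $\|\p\|<\sqrt{2}$. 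Therefore
\[
(1+\|\p\|+\|\psi\|)\,\delta < (2+\sqrt{2})\,\delta < 4\delta,
\]
and $\psi$ is an $(\e+4\delta)$-representation, as claimed.

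There is no serious obstacle here; the only subtle point is noting that the hypothesis $\delta<1$ automatically puts us in the setting of the previous lemma by a spectral argument, after which the rest is bookkeeping of constants.
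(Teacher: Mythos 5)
Your proof is correct and follows essentially the same route as the paper: use $\delta<1$ to deduce that $\p(x)^*\p(x)$ and $\p(x)\p(x)^*$ are invertible, hence $\p(x)\in\GL(\hh)$, apply Lemma \ref{L - invertible-unitary-almost-rep-approx}, and then bound $1+\|\p\|+\|\psi\|=2+\|\p\|\leq 2+\sqrt{1+\delta}<2+\sqrt{2}<4$. Your spectral argument for invertibility is just a slightly more explicit version of the paper's one-line observation.
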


\begin{proof}
As $\p$ is $\delta$-unitary and ${\delta<1}$, we have 
\[{\opnorm{\Id_\hh-\p(x)^*\,\p(x)}<1,\qquad\opnorm{\Id_\hh-\p(x)\,\p(x)^*}<1}\]
for all ${x\in G}$, so ${\p(x)^*\,\p(x)}$ and ${\p(x)\,\p(x)^*}$ are invertible for all ${x\in G}$, and thus in turn ${\p(x)}$ and ${\p(x)^*}$ are invertible for all ${x\in G}$.  As 
\[
{1+\|\p\|+\|\psi\|=2+\|\p\|\leq2+\sqrt{1+\delta}<2+\sqrt{2}},
\] 
it follows by Lemma \ref{L - invertible-unitary-almost-rep-approx} that $\psi$ is an ${(\e+4\delta)}$-representation. 
\end{proof}

\section{Stability and positive definite maps}\label{S - stable unitary rep}

Let $G$ be a  group. Consider the following stability condition, which depends on a positive real number $\kappa>0$ and a Hilbert space $\hh$:

\begin{quote}
there exists a real number $0<\delta<1$ such that for every ${0<\e<\delta}$ and every unitary $\e$-representation ${\p\colon G\to U(\hh)}$  there exists a unitary representation ${\pi\colon G\to U(\hh)}$ such that  $\|\p-\pi\|\leq\kappa\e$.
\end{quote}

\noindent Let us say that $G$ is \emph{stable} if this condition holds for some $\kappa$ and all $\hh$. The theorem of Kazhdan \cite{kazhdan82} mentioned in the introduction states:

\begin{theorem}[Kazhdan, 1982]
Every  amenable group is stable (with constant $\kappa=2$).
\end{theorem}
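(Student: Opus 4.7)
The plan is to use a left-invariant mean on the amenable group $G$ to average $\varphi$ into a positive definite map $\psi$ that is $\varepsilon$-close to $\varphi$, and then to descend from $\psi$ to a genuine unitary representation of $G$ on $\hh$ at distance at most $2\varepsilon$ from $\varphi$.

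First, fix a left-invariant mean $m$ on $\ell^\infty(G)$ and define $\psi\colon G \to B(\hh)$ by
\[
\Phi(\psi(x)) \;:=\; m_y\,\Phi\!\big(\varphi(xy)\varphi(y)^*\big) \qquad (x \in G,\ \Phi \in B(\hh)_*),
\]
as in the construction recalled in the introduction. A direct change-of-variable calculation using the left-invariance of $m$ shows that
\[
\sum_{i,j}\langle \psi(x_i^{-1}x_j)\xi_j,\xi_i\rangle \;=\; m_y\bigg\|\sum_i \varphi(x_iy)^*\xi_i\bigg\|^2 \;\geq\; 0,
\]
so $\psi$ is positive definite; unitarity of $\varphi$ gives $\psi(e) = m_y(\Id_\hh) = \Id_\hh$. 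The identity $\varphi(xy)\varphi(y)^* - \varphi(x) = (\varphi(xy) - \varphi(x)\varphi(y))\varphi(y)^*$ combined with $\|\varphi(y)\| = 1$ and the $\varepsilon$-multiplicativity of $\varphi$ yields $\|\psi - \varphi\|_\infty \leq \varepsilon$.

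Next, since $\varphi$ is $0$-unitary and $\psi$ is within $\varepsilon$ of $\varphi$, Lemma~\ref{L - unitary-prox} gives that $\psi$ is $O(\varepsilon)$-unitary; being positive definite and unital, Lemma~\ref{P - upd epsilon multiplicative} then gives that $\psi$ is also $O(\varepsilon)$-multiplicative. Applying Stinespring's dilation to $\psi$, there exist a Hilbert space $\hk$, an isometry $V\colon \hh \to \hk$, and a unitary representation $\pi\colon G \to U(\hk)$ with $\psi(x) = V^*\pi(x)V$. Because $\psi(x)$ is close to the isometric $\varphi(x)$, the Pythagorean identity
\[
\|V\psi(x)\xi - \pi(x)V\xi\|^2 \;=\; \|\xi\|^2 - \|\psi(x)\xi\|^2
\]
shows that the subspace $V(\hh) \subset \hk$ is approximately $\pi$-invariant, with defect controlled by $\|\Id_\hh - \psi(x)^*\psi(x)\|$.

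To finish, one converts this approximate invariance into a genuine unitary representation on $\hh$ itself by combining the polar-decomposition machinery underlying Lemmas~\ref{L - invertible-unitary-almost-rep-approx} and~\ref{unitary-almost-rep-approx} with a compactness/iteration argument that forces multiplicativity on the nose in the limit. The main obstacle is precisely this last step: producing an \emph{honest} unitary representation on $\hh$ (rather than on a strictly larger dilation space $\hk$) while preserving the sharp constant $\kappa = 2$ requires a careful accounting of the errors introduced at each stage; this is where Kazhdan's original argument invests the bulk of its technical work.
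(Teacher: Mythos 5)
First, note that the paper does not actually prove this theorem: it is stated as a citation to Kazhdan's 1982 paper, although all the ingredients of a proof are present in Prop.\ \ref{stability-equiv} and in \S\ref{S - restricting to epsilon rep}. Your overall strategy (average $\p$ against an invariant mean to get a unital positive definite $\psi$ with $\|\p-\psi\|\leq\e$, then descend to a unitary representation) is the right one and matches that machinery, and your first step is correct.

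There is, however, a genuine quantitative gap at the point where you invoke Lemma \ref{L - unitary-prox} to conclude that $\psi$ is ``$O(\e)$-unitary.'' That bound is true but useless here: it makes $\psi$ only $O(\e)$-multiplicative (via Lemma \ref{P - upd epsilon multiplicative}), i.e.\ no better than $\p$ itself, so the iteration you gesture at in your last paragraph does not converge --- each pass can double the multiplicativity defect rather than shrink it. The whole point of Kazhdan's argument is the \emph{quadratic} gain: expanding
\[
(\Psi_\tau)_y\,\bigl(\p(xy)-\p(x)\p(y)\bigr)\bigl(\p(xy)-\p(x)\p(y)\bigr)^*
=\Id_\hh-\psi(x)\psi(x)^*+\bigl(\p(x)-\psi(x)\bigr)\bigl(\p(x)-\psi(x)\bigr)^*
\]
gives $\|\Id_\hh-\psi(x)\psi(x)^*\|\leq\e^2$ (see the computation and the remark in \S\ref{S - restricting to epsilon rep}), hence $\psi$ is $\e^2$-multiplicative by Lemma \ref{P - upd epsilon multiplicative}. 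With that, Lemma \ref{unitary-almost-rep-approx} (polar decomposition) replaces $\psi$ by a unitary-valued $O(\e^2)$-representation at distance $O(\e^2)$, and iterating as in the proof of $(2)\Rightarrow(1)$ of Prop.\ \ref{stability-equiv} gives a genuine unitary representation $\pi$ on $\hh$ with $\|\p-\pi\|\leq\e+\e^2+\e^4+\cdots\leq 2\e$. Your Stinespring detour is unnecessary for the descent (it is only needed inside Lemma \ref{stinespring-ineq}), and the final step you explicitly leave open is exactly this polar-decomposition-plus-iteration argument; without the quadratic estimate it cannot be completed, so as written the proof does not go through.
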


(Here and below, by amenable group, we mean a group which is amenable as a discrete group.)

In the present section, we note that the stability condition is equivalent to every such map $p$ being approximated by a positive definite almost unitary map defined over the same Hilbert space, rather than a unitary representation.  

In the following proposition $\hh$ is fixed, while the real numbers $\kappa,\kappa_1,\kappa_2,p,\delta$ may vary.

\begin{proposition}\label{stability-equiv}
Let $G$ be a group and  $\hh$ be a Hilbert space. The following are equivalent: 
\begin{enumerate}
\item there exists real numbers $\kappa>0$ and $0<\delta<1$ such that for every ${0<\e<\delta}$ and every unitary $\e$-representation ${\p\colon G\to U(\hh)}$  there exists a unitary representation ${\pi\colon G\to U(\hh)}$ such that  
\[
\|\p(x)-\pi(x)\|\leq\kappa\e
\]
for every $x\in G$;

\item 
there exist real numbers $\kappa_1,\kappa_2>0$, $0<\delta<1$, and $p>1$ such that for every $0<\e<1$ such that $\kappa_1\e^{p-1}\leq\delta^{p-1}$  and every $\e$-representation ${\p\colon G\to U(\hh)}$  there exists an ${\kappa_1\e^p}$-representation  ${\psi:G\to U(\hh)}$ such that 
\[\opnorm{\p(x)-\psi(x)}\leq\kappa_2\e\]
for all ${x\in G}$;

\item there exist real numbers  $\kappa_1,\kappa_2>0$, $0<\delta<1$, and $p>1$ such that for every $0<\e<1$ such that $\kappa_1\e^{p-1}\leq\delta^{p-1}$  and every $\e$-representation ${\p\colon G\to U(\hh)}$  there exists a unital positive definite ${\kappa_1\e^p}$-representation ${\psi:G\to B(\hh)}$ such that 
\[\opnorm{\p(x)-\psi(x)}\leq\kappa_2\e\]
for all ${x\in G}$;

\item there exist real numbers  $\kappa_1,\kappa_2>0$, $0<\delta<1$, and $p>1$ such that for every $0<\e<1$ such that $\kappa_1\e^{p-1}\leq\delta^{p-1}$  and every $\e$-representation ${\p\colon G\to U(\hh)}$  there exists a unital positive definite ${\kappa_1\e^p}$-unitary map ${\psi:G\to B(\hh)}$ such that 
\[\opnorm{\p(x)-\psi(x)}\leq\kappa_2\e\]
for all ${x\in G}$.

\end{enumerate}
\end{proposition}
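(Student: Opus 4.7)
The plan is to close the cycle $(1)\Rightarrow(3)\Rightarrow(2)\Rightarrow(1)$, while $(3)\Leftrightarrow(4)$ falls out immediately from Lemma \ref{P - upd epsilon multiplicative}. The implications $(1)\Rightarrow(2)$ and $(1)\Rightarrow(3)$ are free: a unitary representation $\pi\colon G\to U(\hh)$ is unital positive definite (the operator matrices $(\pi(x_i^{-1}x_j))_{ij}=(\pi(x_i)^*\pi(x_j))_{ij}$ are manifestly positive) and is both a $0$-representation and $0$-unitary, so one takes $\psi=\pi$. The equivalence $(3)\Leftrightarrow(4)$ is just Lemma \ref{P - upd epsilon multiplicative} applied to the approximant $\psi$ in either direction, since $\psi$ is unital positive definite in both conditions.

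For $(3)\Rightarrow(2)$, start from the unital positive definite $\kappa_1\e^p$-multiplicative map $\psi$ furnished by (3). By Lemma \ref{P - upd epsilon multiplicative}, $\psi$ is also $\kappa_1\e^p$-unitary, and the smallness hypothesis $\kappa_1\e^{p-1}\leq\delta^{p-1}<1$ guarantees $\kappa_1\e^p<\e<1$. Lemma \ref{unitary-almost-rep-approx} then produces a unitary-valued map $\psi'\colon G\to U(\hh)$ which is a $5\kappa_1\e^p$-representation with $\|\psi-\psi'\|\leq\kappa_1\e^p$, and the triangle inequality gives $\|\p-\psi'\|\leq\kappa_2\e+\kappa_1\e^p\leq(\kappa_1+\kappa_2)\e$. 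This establishes (2) after relabeling constants (replacing $\kappa_1$ by $5\kappa_1$ and $\kappa_2$ by $\kappa_1+\kappa_2$, keeping the same $p$ and $\delta$).

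For $(2)\Rightarrow(1)$, iterate (2). Starting from a unitary $\e$-representation $\p=\psi_0$ with $\e$ small enough that $c:=\kappa_1\e^{p-1}\leq\delta^{p-1}<1$, condition (2) recursively produces unitary $\e_n$-representations $\psi_n\colon G\to U(\hh)$ with $\e_{n+1}=\kappa_1\e_n^p$ and $\|\psi_n-\psi_{n+1}\|\leq\kappa_2\e_n$. Since $p>1$ and $c<1$, the sequence $(\e_n)$ decreases geometrically ($\e_n\leq c^n\e$), so $(\psi_n)$ is uniformly Cauchy and converges to a limit $\pi\colon G\to U(\hh)$ which is a $0$-representation, hence an honest unitary representation. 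Summing the geometric series gives
\[
\|\p-\pi\|\leq\sum_{n=0}^{\infty}\|\psi_n-\psi_{n+1}\|\leq\kappa_2\e\sum_{n=0}^{\infty}c^n=\frac{\kappa_2}{1-c}\e,
\]
which is (1) with constant $\kappa=\kappa_2/(1-c)$. The one point requiring care, and really the only technical obstacle, is checking that the hypotheses of (2) (namely $\e_n<1$ and $\kappa_1\e_n^{p-1}\leq\delta^{p-1}$) are preserved along the iteration; both follow from the monotonicity $\e_{n+1}\leq c\,\e_n\leq\e_n$ together with the assumption on the initial $\e$.
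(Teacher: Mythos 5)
Your proof is correct and follows essentially the same route as the paper's: the cycle $(1)\Rightarrow(3)\Rightarrow(2)\Rightarrow(1)$ (with $(3)\Leftrightarrow(4)$ from Lemma \ref{P - upd epsilon multiplicative}), using Lemma \ref{unitary-almost-rep-approx} for $(3)\Rightarrow(2)$ and an iteration with summable errors for $(2)\Rightarrow(1)$. The only point to polish is that your final constant $\kappa=\kappa_2/(1-c)$ depends on $\e$ through $c=\kappa_1\e^{p-1}$, whereas condition (1) requires $\kappa$ to be fixed before $\e$; since $c\leq\delta^{p-1}<1$ uniformly over the admissible $\e$, it suffices to take $\kappa=\kappa_2/(1-\delta^{p-1})$.
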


\begin{proof} The implications (1) $\impl$ (2), (3), (4) are clear, and  the equivalence (3) $\ssi$ (4) follows immediately from Lemma \ref{P - upd epsilon multiplicative}. 

(3) $\impl$ (2)
Let $\kappa_1,\kappa_2>0$, $0<\delta<1$, and $p>1$, and suppose that ${\psi:G\to B(\hh)}$ is a map witnessing ${(3)}$. Then $\psi$ is ${\kappa_1\e^p}$-unitary and as ${\kappa_1\e^p\leq\delta^{p-1}\e<1}$, it follows from Lemma \ref{unitary-almost-rep-approx} that there exists a ${5\kappa_1\e^p}$-representation ${\pi:G\to U(\hh)}$ such that 
\[\opnorm{\p(x)-\pi(x)}\leq\opnorm{\p(x)-\psi(x)}+\opnorm{\psi(x)-\pi(x)}<(\kappa_2+ \kappa_1\e^{p-1})\e<(1+\kappa_2)\e\]
for all ${x\in G}$. Therefore ${(3)\Rightarrow(2)}$ holds. 

(2) $\impl$ (1) Let ${\p_0=\p}$, let ${\e_0=\e}$, and for every ${n\in\NI}$, let ${\p_n}$ be the map yielded by applying ${(2)}$ to ${\p_{n-1}}$; then ${\p_n}$ is a unitary ${\e_n}$-representation, where 
\[
\e_n=\kappa_1\e_{n-1}^p=\kappa_1^{1+p+\cdots+p^{n-1}}\e^{p^n}=\kappa_1^{(p-1)^{-1}(p^n-1)}\e^{p^n}=\kappa_1^{-(p-1)^{-1}}\big(\kappa_1^{(p-1)^{-1}}\e\big)^{p^n-1}\,\e
\] 
for all $n\in \IN$.
As 
\[\opnorm{\p_n(x)-\p_{n+1}(x)}\leq\kappa_2\e_n\]
for all ${x\in G}$ and ${n\in\NI}$, it follows that ${(\p_n(x))}$ converges for all ${x\in G}$. Let ${\pi\colon G\to U(\hh)}$ be the unitary map defined by 
\[\pi(x)=\lim\p_n(x)\]
Then $\pi$ is a representation and 
\begin{align*}
\opnorm{\p(x)-\pi(x)}&\leq\opnorm{\p(x)-\p_1(x)}+\sum_{n\,=\,1}^\infty\opnorm{\p_n(x)-\p_{n+1}(x)}
\\&\leq\Big(1+\kappa_1^{-(p-1)^{-1}}\sum_{n\,=\,1}^\infty\big(\kappa_1^{(p-1)^{-1}}\e\big)^{p^n-1}\Big)\,\kappa_2\mskip0.25\thinmuskip\e
\\&\leq\Big(1+\kappa_1^{-(p-1)^{-1}}\sum_{n\,=\,1}^\infty\delta^{p^n-1}\Big)\,\kappa_2\mskip0.25\thinmuskip\e
\end{align*}
for all ${x\in G}$, which proves the implication ${(2)\Rightarrow(1)}$. 
\end{proof}

\begin{remark}  
 It is easy to show that the equivalences in Proposition \ref{stability-equiv} still hold if 
  we assume ${\psi\prec\p}$ in the last three conditions and ${\pi\prec\p}$ in the first.
\end{remark}

\section{Remarks on uniformly bounded representations}\label{S - ub representations}

 Let $\p\colon G\to U(\hh)$ be an $\e$-representation, $\e>0$. We can establish stability in two steps as follows: 
\begin{enumerate}
\item first take $\p$ to a uniformly bounded representation $\psi\colon G\to B(\hh)$ (fixing multiplicativity), and
\item then take $\psi\colon G\to B(\hh)$ to a unitary representation $\pi\colon G\to U(\hh)$ (fixing unitarity)
\end{enumerate}
in such a way that $\|\p-\psi\|$ and $\|\psi-\pi\|$ are small. 

In Prop.\ \ref{stability-equiv}, these two steps are performed simultaneously, following the idea of \cite{bot13} (cf.\ \cite{kazhdan82,johnson88,shtern99}).

Observe that the first step may result in a map $\psi$ such that ${\|{\psi}\|>\|{\p}}\|$; one is therefore lead to consider the following form of stability in the case of uniformly bounded representations:

\begin{quote}
there exist positive real number $\kappa, M\geq 1$ and ${0<\delta <1}$ such that for every ${0<\e\leq\delta}$ and every unital uniformly bounded $\e$-representation ${\p:G\to B(\hh)}$  there exists a representation ${\psi:G\to B(\hh)}$ such that ${\|{\psi}\|\leq M\|{\p}}\|$ and 
\[
{\|{\p(x)-\psi(x)}\|\leq\kappa\,\|{\p}\|\,\e}
\]
for all ${x\in G}$.
\end{quote}

We have the following implication:

\begin{prop}\label{stability-equiv-2} Let $G$ be a group and $\hh$ be a Hilbert space. 
 Suppose that there exist positive real numbers ${\kappa_1,\kappa_2\geq 1}$, ${0<\delta<1}$, and ${p>1}$, and a non-decreasing function $F:(0,1)\to[1,\infty)$ such that 
\[
M:=\prod_{n\,=\,0}^\infty F(\delta^{p^n})<\infty
\]
such that for every $\e>0$ such that $\kappa_1\e^{p-1}\leq\delta^{p-1}$ and every unital uniformly bounded $\e$-representation ${\p:G\to B(\hh)}$  there exists a unital ${\kappa_1\e^p}$-representation ${\psi:G\to B(\hh)}$ such that ${\|{\psi}
\|\leq F(\e)\,\|{\p}\|}$ and 
\[
{\|{\p-\psi}\|\leq\kappa_2\|{\p}\|\,\e}.
\]
Then $G$ satisfies the aforementioned stability condition for uniformly bounded representations. 
\end{prop}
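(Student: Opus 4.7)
The plan is to iterate the hypothesis to generate a Cauchy sequence of maps whose limit is a true representation. Set $\p_0 := \p$ and $\e_0 := \e$, and recursively let $\p_{n+1}$ be the unital $\e_{n+1}$-representation given by applying the hypothesis to $\p_n$, where $\e_{n+1} := \kappa_1 \e_n^p$; this yields
\[
\|\p_{n+1}\| \leq F(\e_n)\,\|\p_n\|, \qquad \|\p_{n+1}-\p_n\| \leq \kappa_2\,\|\p_n\|\,\e_n.
\]
The first thing to check is that the hypothesis can indeed be applied at each step. Unwinding the recursion gives
\[
\e_n = \kappa_1^{-1/(p-1)}\bigl(\kappa_1^{1/(p-1)}\e\bigr)^{p^n} \leq \delta^{p^n - 1}\e,
\]
provided the initial $\e$ satisfies $\kappa_1^{1/(p-1)}\e \leq \delta$. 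So I take the global threshold to be $\delta' := \delta\,\kappa_1^{-1/(p-1)}$; then $\e_n$ is decreasing, the premise $\kappa_1\e_n^{p-1}\leq\delta^{p-1}$ holds for every $n$, and $\e_n \leq \delta^{p^n}$ since $\kappa_1\geq 1$.

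Next I would establish uniform control of the norms. Because $F$ is non-decreasing and $F\geq 1$,
\[
\|\p_n\| \leq \prod_{k=0}^{n-1} F(\e_k)\,\|\p\| \leq \prod_{k=0}^{n-1} F(\delta^{p^k})\,\|\p\| \leq M\,\|\p\|
\]
by the convergence of $\prod_{k\geq 0} F(\delta^{p^k})$. Combining this with the displacement estimate,
\[
\|\p_{n+1}-\p_n\| \leq \kappa_2 M\,\|\p\|\,\e_n,
\]
and summing, I get $\sum_{n\geq 0} \|\p_{n+1}-\p_n\| \leq \kappa_2 M C\,\|\p\|\,\e$, where $C := \sum_{n=0}^\infty \delta^{p^n-1} < \infty$ since $p>1$ and $\delta<1$. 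Hence $(\p_n(x))$ is Cauchy uniformly in $x\in G$, so converges pointwise to a map $\psi : G \to B(\hh)$ satisfying $\|\psi\|\leq M\,\|\p\|$ and $\|\p-\psi\|\leq \kappa_2 M C\,\|\p\|\,\e$.

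Finally I would verify that $\psi$ is a genuine representation: for fixed $x,y\in G$ and any $n$,
\[
\|\psi(xy)-\psi(x)\psi(y)\| \leq \|\psi(xy)-\p_n(xy)\| + \e_n + \|\p_n(x)\p_n(y)-\psi(x)\psi(y)\|,
\]
and each term vanishes as $n\to\infty$ (the middle one because $\p_n$ is an $\e_n$-representation and $\e_n\to 0$; the outer two by uniform convergence together with the uniform bound $\|\p_n\|\leq M\|\p\|$). Thus $\psi$ is a uniformly bounded representation and the stability condition is satisfied with constants $\kappa := \kappa_2 M C$, the bound $M$, and threshold $\delta'$. The main work is really just bookkeeping; no genuine obstacle arises because the super-geometric decay of $\e_n$ makes every summation harmless, and the hypothesis on $M$ is precisely what is needed to keep $\|\p_n\|$ from blowing up along the iteration.
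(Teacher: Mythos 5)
Your proposal is correct and follows essentially the same route as the paper: iterate the hypothesis, verify $\e_n\leq\delta^{p^n}$ so the premise remains applicable and the product bound $\|\p_n\|\leq M\|\p\|$ holds, then sum the super-geometrically decaying increments to obtain a limit representation with the stated distance estimate. Your added care in checking that the hypothesis can be re-applied at each stage and in verifying multiplicativity of the limit is welcome but does not change the argument.
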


\begin{proof}
Let ${\psi_0=\p}$, let ${\e_0=\e}$, and for every ${n\in\IN}$, let ${\psi_n}$ be the map yielded by applying the condition to ${\psi_{n-1}}$; then ${\psi_n}$ is a uniformly bounded unital ${\e_n}$-representation where
\[
\e_n=\kappa_1\e_{n-1}^p=\kappa_1^{1+p+\cdots+p^{n-1}}\e^{p^n}=\kappa_1^{(p-1)^{-1}(p^n-1)}\e^{p^n}=\kappa_1^{-(p-1)^{-1}}\big(\kappa_1^{(p-1)^{-1}}\e\big)^{p^n}\leq \delta^{p^n}
\]
and  
\[
{\|{\psi_n}\|\leq F(\e_{n-1})\,\|{\psi_{n-1}}\|\leq\cdots\leq\|{\p}\|\prod_{j\,=\,0}^{n-1}F(\e_j)\leq\|{\p}\|\prod_{j\,=\,0}^{n-1} F(\delta^{p^j})\leq M\,\|{\p}\|}
\]
for all ${n\in\IN}$. In turn, this implies that 
\[
{\opnorm{\psi_n(x)-\psi_{n+1}(x)}\leq\kappa_2\,\|{\psi_n}\|\e_n\leq M\kappa_2\,\|{\p}\|\e_n}
\]
for all ${x\in G}$ and ${n\in\IN}$, and thus it follows that ${(\psi_n(x))}$ converges for all ${x\in G}$. Let ${\pi:G\to B(\hh)}$ be the unital uniformly bounded map defined by 
\[
{\psi(x)=\lim\psi_n(x)}
\]
Then $\psi$ is a representation and 
\begin{align*}
\|{\p(x)-\psi(x)}\|&\leq\|{\p(x)-\psi_1(x)}\|+\sum_{n\,=\,1}^\infty\|{\psi_n(x)-\psi_{n+1}(x)}\|
\\&\leq\Big(1+\sum_{n\,=\,1}^\infty\delta^{p^n-1}\Big)\,M\kappa_2\,\|{\p}\|\,\e
\end{align*}
for all ${x\in G}$. 
\end{proof}

 Suppose that $G$ is amenable. Shtern proves in \cite[Theorem 8]{shtern13}  that the assumptions in Prop.\ \ref{stability-equiv-2} are satisfied, and therefore that the stability condition (for uniformly boundes representations) holds. 
  Furthermore,  it follows by Dixmier's theorem  that if ${\psi:G\to B(\hh)}$ is a uniformly bounded representation, then there exists a unitary representation ${\pi:G\to U(\hh)}$ such that 
\[
{\|{\psi(x)-\pi(x)}\|\leq\|{\psi}\|\,\big(\|{\psi}\|^2-1\big)}
\]
for all ${x\in G}$. In particular, if $\psi$ is $\delta$-unitary, then  $\|{\psi}\|^2-1\leq \delta$; if in addition we have that  ${\|{\psi}\|\leq M\,\|{\p}\|}$ and
\[
{\|{\p(x)-\psi(x)\|}\leq\kappa\,\|{\p}\|\,\e}
\]
for all ${x\in G}$, then 
\[
{\|{\p(x)-\pi(x)\|}\leq(\kappa\e+M\delta)\|{\p}\|}
\]
for all ${x\in G}$ (compare the remarks in Section 7 of \cite{johnson88}).

\section{Semi-unitarizability}\label{S - semi-unitarizable}

Let $M$ be a von Neumann algebra acting on a Hilbert space $\hh$, and let $V$ be a normal dual Banach right $M$-module (i.e., such that the maps $x\mapsto vx$ is weak$^*$-continuous). We write $B_M(V)$ for the set of bounded $M$-module maps on $V$.

\begin{definition}\label{D - semi-unitarizable}
A uniformly bounded linear representation $\pi\colon G\to B_M(V)$ is \emph{semi-unitarizable} if there exists an $M$-valued 
positive semi-definite sesquilinear form $\ip{\,,\,}\colon V\times V\to M$ which is
\begin{enumerate}
\item continuous:
\[
\|\ip{u,\ v}\|\leq C\|u\|\|v\|\ \ \forall u,v\in V, \text{ for some constant } C
\]
\item invariant:
\[
\ip{\pi(s)u, \pi(s)v}=\ip{u,v}\ \ \forall u,v\in V,
\]
\item non-degenerate:
\[
\ip{u, u}\neq 0, \ \forall u\in V^G,\ u\neq 0.
\]
\end{enumerate}
We say that $\pi$ is \emph{unitarizable} if   $\ip{\,,\,}$ can furthermore be chosen to be positive definite, i.e.,  $\ip{u,\ u}\neq 0, \ \forall u\in V$, $u\neq 0$. 
\end{definition}

It is well-known (considering the case $M=\CI$ of trivial coefficients) that every representation of a finite group on a finite dimensional vector space  is unitarizable, and every uniformly bounded representation of an amenable group on a Hilbert space is unitarizable. The Dixmier problem asks whether this property characterizes amenable groups.
  
Consider the complex vector space ${\ell^\infty(G,M)}$ of uniformly bounded maps from $G$ to $M$, which we view as a normal Banach  $M$-bimodule  under the action defined by $(S\p T)(x)=S\p(x)T$ for every $x\in G$ and $S,T\in M$.

\begin{proposition}\label{P - semi-unitarizable} Let $G$ be a group and $M$ be a von Neumann algebra. The following are equivalent:
\begin{enumerate}
\item $G$ is amenable;
\item the left regular representation 
\begin{align*}
\lambda\colon G\acts \ell^\infty(G,M),\ \lambda_x(\p)(y):=\p(x^{-1}y),\ \p\in \ell^\infty(G,M),\ x,y\in G 
\end{align*}
is semi-unitarizable.
\end{enumerate}
\end{proposition}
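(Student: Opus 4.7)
For $(1) \Rightarrow (2)$, I would follow the standard averaging construction. Given an invariant mean $\tau$ on $\ell^\infty(G)$, I extend it to an $M$-valued mean $\IE \colon \ell^\infty(G,M) \to M$ by the formula $\phi(\IE(f)) := \tau_y(\phi(f(y)))$ for each $\phi \in M_*$; this extension is automatically $M$-bimodular, unital, positive, and $G$-invariant. The form
\[
\ip{\p,\psi} := \IE_y\bigl[\p(y)^*\psi(y)\bigr]
\]
is then sesquilinear with respect to the right $M$-action, positive semi-definite (because $\p^*\p \geq 0$ pointwise and $\IE$ preserves positivity), continuous with constant $1$ (because $\|\IE\|\leq 1$), and $G$-invariant (because $\IE$ is). Non-degeneracy on $V^G$ is immediate: the fixed vectors of $\lambda$ on $\ell^\infty(G,M)$ are exactly the constant maps $S\cdot\1$ with $S \in M$, and $\ip{S\1,S\1} = \IE(S^*S\cdot\1) = S^*S$ is nonzero whenever $S$ is.

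For $(2) \Rightarrow (1)$, the plan is to scalarize the form to obtain a bounded $G$-invariant linear functional on $\ell^\infty(G)$ and then extract a mean via Jordan decomposition. Since $\ip{\1,\1}$ is a nonzero positive element of $M$, I can pick a state $\tau_0 \in M_*$ with $\tau_0(\ip{\1,\1}) > 0$ and set
\[
\mu(f) := \tau_0\bigl(\ip{\1,\,f\cdot\1}\bigr), \qquad f \in \ell^\infty(G).
\]
Sesquilinearity of the form combined with linearity of $f \mapsto f\cdot\1$ gives linearity of $\mu$; continuity of the form gives boundedness; invariance of the form together with $\lambda_x\1 = \1$ and $\lambda_x(f\cdot\1) = (\lambda_x f)\cdot\1$ gives $G$-invariance; and by construction $\mu(1) = \tau_0(\ip{\1,\1}) > 0$, which is a positive real number since $\ip{\1,\1}$ is self-adjoint.

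To finish, I would apply the Jordan decomposition on the commutative unital $C^*$-algebra $\ell^\infty(G)$. Writing $\mu_r := \Re\mu = \mu_r^+ - \mu_r^-$ with $\mu_r^\pm$ bounded positive functionals, uniqueness of the decomposition forces each translate $g\cdot\mu_r^\pm$ to provide another Jordan decomposition of $\mu_r$, so the $\mu_r^\pm$, and hence $|\mu_r| := \mu_r^+ + \mu_r^-$, are all $G$-invariant. Since $|\mu_r|(1) \geq |\mu_r(1)| = \mu(1) > 0$, the normalization $m := |\mu_r|/|\mu_r|(1)$ is a $G$-invariant state on $\ell^\infty(G)$, i.e., an invariant mean on $G$, so $G$ is amenable. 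The conceptual obstacle I expect is precisely this last step: the functional $\mu$ produced from the form is not manifestly positive on $f \geq 0$, and what makes the argument go through is that uniqueness of Jordan decomposition lets $G$-invariance descend from $\mu$ to its automatically positive signed parts.
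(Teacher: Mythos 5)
Your proof is correct and takes essentially the same route as the paper: the forward direction averages $\p(y)^*\psi(y)$ against the invariant mean extended to $\ell^\infty(G,M)$, and the converse scalarizes $\p\mapsto\ip{\1,\p}$ by a state on $M$ that does not vanish on $\ip{\1,\1}$ to produce a nonzero bounded left-invariant functional on $\ell^\infty(G)$. The only difference is cosmetic: you spell out the final Jordan-decomposition step (passing from a bounded invariant functional with $\mu(\1)\neq 0$ to an invariant mean), which the paper leaves implicit in the last line of the proof of Lemma \ref{L - mean-ext}.
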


We shall use the following notion of an invariant mean on the space $\ell^\infty(G,M)$.

\begin{definition}\label{D - Inv mean BH}
Let $G$ be a  group and $M$ be a von Neumann algebra. An \emph{invariant mean} on ${\ell^\infty(G,M)}$ is a linear map ${\IE:\ell^\infty(G,M)\to M}$ such that ${\opnorm{\IE}=1}$, ${\IE(T\1)=T}$ for all ${T\in M}$, and 
\[
{\IE_{y}\p(xy)=\IE_{y}\p(y)}
\]
for all ${\p\in\ell^\infty(G,M)}$ and ${x\in G}$. 
\end{definition}

\begin{lemma}
An invariant mean $\IE$ in the sense of Def.\ \ref{D - Inv mean BH} is positive and satisfies 
\[
{\IE(S\p T)=S\,\IE(\p)\,T}
\]
for all ${\p\in\ell^\infty(G,M)}$ and ${S,T\in M}$. 
\end{lemma}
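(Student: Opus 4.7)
The strategy is to recognize $\IE$, after identifying $M$ with the constant maps in $\ell^\infty(G,M)$, as a norm-one projection on a C$^*$-algebra, so that Tomiyama's theorem delivers both conclusions at once.

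First I would observe that $\ell^\infty(G,M)$ is a unital C$^*$-algebra under pointwise multiplication and involution $\p^*(x) := \p(x)^*$, with sup-norm $\|\p\|_\infty := \sup_{x \in G}\|\p(x)\|$; the C$^*$-identity is inherited pointwise from $M$. The embedding $\iota \colon M \to \ell^\infty(G,M)$, $T \mapsto T\1$, is a unital $*$-homomorphism identifying $M$ with a unital C$^*$-subalgebra.

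The hypotheses $\|\IE\| = 1$ and $\IE(T\1) = T$ amount to $\IE$ being a contraction with $\IE \circ \iota = \Id_M$. Consequently, $E := \iota \circ \IE$ is a norm-one linear idempotent on $\ell^\infty(G,M)$ with image $\iota(M)$. The main step is then to invoke Tomiyama's theorem on norm-one projections, which asserts that $E$ is positive and a $\iota(M)$-bimodule map. Choosing $b = S\1$ and $b' = T\1$ in the identity $E(b\p b') = b E(\p) b'$ and transporting through $\iota$ yields $\IE(S\p T) = S \IE(\p) T$, while positivity of $E$ gives positivity of $\IE$. Note that the invariance condition on $\IE$ plays no role.

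If one preferred a direct argument avoiding Tomiyama, the principal obstacle would be establishing that $\IE$ preserves self-adjointness. For self-adjoint $a \in \ell^\infty(G,M)$, one would write $\IE(a) = u + iv$ with $u, v$ self-adjoint and exploit $\|a - i\beta \1\|_\infty^2 = \|a\|_\infty^2 + \beta^2$ together with $\|\IE\| = 1$ to obtain $\|u + i(v - \beta)\|^2 \leq \|a\|_\infty^2 + \beta^2$ for every $\beta \in \RI$; pairing $(u + i(v - \beta))^*(u + i(v - \beta))$ against states of $B(\hh)$ eliminates the commutator contribution, and letting $\beta \to \pm\infty$ forces every state to vanish on $v$, whence $v = 0$. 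Positivity then follows from $\|a - \tfrac{1}{2}\1\|_\infty \leq \tfrac{1}{2}$ when $0 \leq a \leq \1$, and the bimodule identity by a standard polarization and Cauchy--Schwarz argument.
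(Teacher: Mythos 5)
Your proof is correct and follows exactly the paper's argument: compose $\IE$ with the canonical inclusion $\iota\colon M\hookrightarrow\ell^\infty(G,M)$ to obtain a norm-one projection onto $\iota(M)$, and apply Tomiyama's theorem to get positivity and the $M$-bimodule property simultaneously. The additional direct argument you sketch is a reasonable Tomiyama-free alternative, but the main route is the same as in the paper.
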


\begin{proof} This is an obvious application of Tomiyama's theorem. Namely, consider the canonical inclusion map ${i\colon \M\hookrightarrow\ell^\infty(G,\M)}$. As ${(i\circ\IE)(T\1)=i(T)=T\1}$ for all ${T\in M}$ and ${\|{i\circ\IE}\|=1}$,  it follows by Theorem 1 in \cite{tomiyama57} that 
\[
{(i\circ\IE)(S\mskip0.5\thinmuskip\p\mskip0.5\thinmuskip T)=i(S)\,(i\circ\IE)(\p)\,i(T)=i(S\,\IE(\p)\,T)}
\]
for all ${S,T\in\M}$ and ${\p\in\ell^\infty(G,\M)}$; as $i$ is an embedding, this implies that 
\[
{\IE(S\mskip0.5\thinmuskip\p\mskip0.5\thinmuskip T)=S\,\IE(\p)T}
\]
for all ${S,T\in\M}$ and ${\p\in\ell^\infty(G,\M)}$. Furthermore, as ${i\circ\IE}$ is positive and $i$ is an isometric *-homomorphism, it follows that $\IE$ is positive. 
\end{proof}

Every  invariant mean $\IE\in \ell^\infty(G)^*$ extends to an invariant mean $\IE\colon \ell^\infty(G,M)\to M=(M_*)^*$ defined by
\[
\Phi(\IE_x \p(x))=\IE_x \Phi(\p(x)), \forall \Phi\in M_*
\]
(see e.g., \cite{kazhdan82, shtern99,shtern13, bot13,dcot19}). Explicitly, for each ${\p\in\ell^\infty(G,M)}$ and ${\eta\in\hh:=L^2(M)}$, define a bounded linear functional ${\psi_{\p,\eta}\in\hh^*}$ by 
\[
\psi_{\p,\eta}(\xi)=\IE_{x}\ip{\xi,\p(x)\,\eta},
\]
where $\IE$ is an invariant mean on $\ell^\infty(G)$. It follows by the Riesz representation theorem that a map ${\IE:\ell^\infty(G,M)\to B(\hh)}$ can be defined by 
\[
\psi_{\p,\eta}(\xi)=\ip{\xi,\IE(\p)(\eta)}
\]
Clearly, $\IE(\p)\in M''=M$, so $\IE$ defines an invariant mean on ${\ell^\infty(G,M)}$.

\begin{lemma}\label{L - mean-ext} The following are equivalent:
\begin{enumerate}
\item $G$ is amenable;
\item there exists an invariant mean on ${\ell^\infty(G,M)}$;
\item there exists a  bounded linear map ${\IE:\ell^\infty(G,M)\to M}$ such that  
 ${\IE(\1)\neq 0}$ and
 \[
 {\IE_{y}\p(xy)=\IE_{y}\p(y)}
 \] 
 for all ${\p\in\ell^\infty(G,M)}$ and ${x\in G}$.
 \end{enumerate} 
\end{lemma}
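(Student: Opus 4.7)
The implication $(1)\Rightarrow(2)$ is carried out by the construction sketched in the paragraph preceding the lemma: starting from an invariant mean $\IE_0\in\ell^\infty(G)^*$, I would define $\IE(\p)\in B(\hh)$ (with $\hh=L^2(M)$) via the sesquilinear form $(\xi,\eta)\mapsto\IE_0\bigl(x\mapsto\ip{\xi,\p(x)\eta}\bigr)$; boundedness by $\|\p\|_\infty\|\xi\|\|\eta\|$ and Riesz representation yield a bounded operator. The key check is $\IE(\p)\in M=M''$: since each $\p(x)\in M$ commutes with every $T\in M'$, a direct computation gives $\ip{\xi,\IE(\p)T\eta}=\IE_0\bigl(x\mapsto\ip{T^*\xi,\p(x)\eta}\bigr)=\ip{\xi,T\,\IE(\p)\eta}$, so $\IE(\p)\in M''=M$. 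Left-invariance, $\|\IE\|=1$, and $\IE(T\1)=T$ all transport directly from the corresponding properties of $\IE_0$. The implication $(2)\Rightarrow(3)$ is then immediate, since an invariant mean in the sense of Definition \ref{D - Inv mean BH} satisfies $\IE(\1)=\Id_M\neq 0$.

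The substantive direction is $(3)\Rightarrow(1)$; the plan is a two-step reduction to the scalar case. Given a bounded left-invariant linear map $\IE:\ell^\infty(G,M)\to M$ with $\IE(\1)\neq 0$, I would use Hahn--Banach to choose a bounded linear functional $\omega\in M^*$ with $\omega(\IE(\1))=1$, and define $m_0\in\ell^\infty(G)^*$ by $m_0(f):=\omega(\IE(f\cdot\1))$, where $f\cdot\1\in\ell^\infty(G,M)$ is the map $x\mapsto f(x)\Id_M$. Since $f\mapsto f\cdot\1$ is a linear, isometric, $G$-equivariant embedding $\ell^\infty(G)\hookrightarrow\ell^\infty(G,M)$, the functional $m_0$ is bounded and left-invariant, and $m_0(\1_G)=1$, so in particular $m_0\neq 0$.

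To upgrade $m_0$ to a left-invariant \emph{mean}, identify $\ell^\infty(G)\cong C(\beta G)$: by Riesz representation, $m_0$ corresponds to a bounded complex regular Borel measure $\mu$ on $\beta G$ which is invariant under the (homeomorphism) $G$-action on $\beta G$. The total variation $|\mu|$ is then a nonzero positive $G$-invariant regular Borel measure, and the normalized measure $|\mu|/\|\mu\|$ is a $G$-invariant probability measure on $\beta G$, equivalently a left-invariant mean on $\ell^\infty(G)$. Hence $G$ is amenable.

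The only real obstacle is this last step---passing from the mere nonvanishing $\IE(\1)\neq0$ to a genuinely \emph{positive} left-invariant functional. It is handled by the classical fact that the total variation of a left-invariant bounded linear functional on a commutative $C^*$-algebra is itself left-invariant, which ultimately reflects that $G$ acts on $\beta G$ by homeomorphisms (so pushforward commutes with total variation).
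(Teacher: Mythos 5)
Your proof is correct and follows essentially the same route as the paper: (1)$\Rightarrow$(2) via the Riesz-representation extension of a scalar invariant mean sketched before the lemma, (2)$\Rightarrow$(3) trivially, and (3)$\Rightarrow$(1) by composing $\IE$ with a linear functional on $M$ and restricting to the scalar copy of $\ell^\infty(G)$ to obtain a nonzero bounded left-invariant functional. The only difference is cosmetic: the paper uses a vector functional $f\mapsto\ip{\IE(f)\,\xi,\xi}$ and leaves implicit the classical fact that a nonzero bounded left-invariant functional on $\ell^\infty(G)$ forces amenability, whereas you pick a Hahn--Banach functional $\omega\in M^*$ and spell out that last step via the total-variation argument on $\beta G$; both are valid.
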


\begin{proof}
(1) $\impl$ (2) was noted above and (2) $\impl$ (3) is obvious. To show (3) $\impl$ (1), fix a unit vector ${\xi\in(\ker\,\IE(\1)^{1/2})^\perp}$ and let ${\psi:\ell^\infty(G,M)\to\C}$ be the linear map defined by 
\[
\psi(f)=\ip{\IE(f)\,\xi,{\xi}}.
\]
The restriction $\psi_{|\ell^\infty(G)}$ to the canonical embedding $\ell^\infty(G)\subset \ell^\infty(G,M)$   defines a nontrivial left invariant  bounded linear functional on $\ell^\infty(G)$. Therefore,  $G$ is  amenable. 
\end{proof}

\begin{remark} The invariant mean $\IE^M$ defined in (1) $\impl$ (2) satisfies 
\begin{enumerate}
\item[] ${\IE_{x}\ip{\xi,\p(x)\,\eta}=\ip{\xi,\IE_{x}^M\p(x)\,\eta}}$
\item[] ${\IE_{x}\ip{\p(x)\,\xi,\eta}=\ip{\IE_{x}^M\p(x)\,\xi,\eta}}$
\end{enumerate}
for every ${\p\in\ell^\infty(G,M)}$ and ${\xi,\eta\in\hh}$, where $\IE$ is a fixed invariant mean on $\ell^\infty(G)$. We shall assume that these properties hold and simply write $\IE^M=\IE$. 
\end{remark}

\begin{proof}[Proof of Prop.\ \ref{P - semi-unitarizable}]
If $G$ is amenable and $\IE$ is a left invariant mean on $\ell^\infty(G,M)$, then $\ip{\p ,\psi}:=\IE_x \p (x)^*\psi(x)$ is a continuous non-degenerate positive $M$-valued semi-definite sesquilinear form and 
\[
\ip{\lambda_x\p ,\ \lambda_x\psi}=\IE_x {\p (x^{-1}y)}^*\psi(x^{-1}y) = \ip{\p ,\psi}
\]
for every $\p ,\psi\in \ell^\infty(G)$, $x\in G$.

Conversely, if $\ip{\,,\,}\colon \ell^\infty(G,M)\times \ell^\infty(G,M)\to M$ is a non-degenerate positive semi-definite sesquilinear form on $\ell^\infty(G,M)$, then the  linear map
\[
\IE(\p ):=\ip{\1,\p }
\]
is left invariant on $\ell^\infty(G,M)$, since
\[
\IE(\lambda_x\p )=\ip{\1,\lambda_x\p }=\ip{\lambda_x\1,\lambda_x\p }=\IE(\p )
\]
for every $\p \in \ell^\infty(G,M)$, $x\in G$. Since $\IE$ is  bounded and $\IE(\1)\neq 0$, it follows by Lemma \ref{L - mean-ext} that $G$ is amenable. 
\end{proof}

\section{semi-definite sesquilinear forms}

Let $G$ be a group,  $\hh$ be a Hilbert space, and $V$ be a complex vector space.

\begin{lemma}\label{pssf-pos-def-unitary-lemma} Let ${X\subset V}$ be a subset stable under multiplication by $i$, and $\ip{\cdot,\cdot}$ be an ${B(\hh)}$-valued positive semi-definite sesquilinear form on $V$. Let ${u\in V}$ be a vector. If $\pi$ is a linear representation of $G$ on $V$ such that the map ${\ip{\pi(\,\cdot\,)\,(u+v),u+v}}$ is positive definite from $G$ to $B(\hh)$ for all ${v\in X}$, then 
\[
\ip{\pi(s)\,u,v}=\ip{u,\pi(s^{-1})\,v}
\]
for all ${v\in X}$ and ${s\in G}$. 
\end{lemma}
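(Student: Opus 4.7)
The plan is to reformulate the desired identity in terms of the sesquilinear form
\[
B(x,y) \;:=\; \langle \pi(s^{-1})x, y\rangle \;-\; \langle x, \pi(s)y\rangle,
\]
which is linear in $x$ and conjugate-linear in $y$. Showing $B(u,v)=0$ for every $v\in X$ (and then replacing $s$ by $s^{-1}$) will give the desired equality.

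First I invoke the standard fact that any $B(\hh)$-valued positive definite function $\varphi\colon G\to B(\hh)$ satisfies $\varphi(s^{-1})=\varphi(s)^*$ (take the $2\times 2$ matrix with entries $\varphi(x_i^{-1}x_j)$ for $x_1=e$, $x_2=s$), together with the polarization property $\langle x,y\rangle^*=\langle y,x\rangle$, which holds for any positive semi-definite sesquilinear form. Applied to $\varphi_v(s):=\langle \pi(s)(u+v),u+v\rangle$, these give
\[
\langle \pi(s^{-1})(u+v),\, u+v\rangle \;=\; \langle u+v,\, \pi(s)(u+v)\rangle,
\]
which is precisely $B(u+v,u+v)=0$ for every $v\in X$ and $s\in G$.

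The hypothesis that $X$ is stable under multiplication by $i$ ensures that $v$, $-v$, and $iv$ all belong to $X$ whenever $v$ does. Expanding $B(u+w,u+w)=0$ using sesquilinearity of $B$ for $w=v$, $w=-v$, and $w=iv$ produces three relations among $B(u,u)$, $B(v,v)$, $B(u,v)$, $B(v,u)$. Adding the first two identities yields $B(u,u)+B(v,v)=0$ and $B(u,v)+B(v,u)=0$; substituting these into the third gives $-iB(u,v)+iB(v,u)=0$, i.e.\ $B(u,v)=B(v,u)$. Together with the antisymmetry relation, this forces $B(u,v)=0$, as required.

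The main obstacle is ensuring that the imaginary substitution $w=iv$ is available to isolate $B(u,v)$: the real substitutions $w=\pm v$ alone would only yield $B(u,v)+B(v,u)=0$, which is insufficient. This is where the stability of $X$ under multiplication by $i$ enters decisively, playing the role of the imaginary part of the complex polarization identity, adapted to the constraint that only additive translates $u+v$ (rather than arbitrary scalar multiples) are admissible inputs to the positive-definiteness hypothesis.
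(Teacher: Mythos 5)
Your proof is correct and follows essentially the same route as the paper: both first use positive definiteness to get $\langle\pi(s)(u+v),u+v\rangle=\langle u+v,\pi(s^{-1})(u+v)\rangle$ for all $v\in X$, and then polarize over $v,-v,iv$ (the paper also uses $-iv$) using the stability of $X$ under multiplication by $i$. Your packaging via the defect form $B$ is just a reorganization of the paper's explicit polarization identity.
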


\begin{proof}
As ${\ip{\pi(\,\cdot\,)\,(u+v),u+v}}$ is positive definite for all ${v\in X}$, it follows that 
\[
{\ip{\pi(s)\,(u+v),u+v}=\ip{\pi(s^{-1})\,(u+v),u+v}^*=\ip{u+v,\pi(s^{-1})\,(u+v)}}
\]
for all ${v\in E}$ and ${s\in G}$; as $E$ is stable under multiplication by $i$, this implies that 
\begin{align*}
\ip{\pi(s)\,u,v}&=\textstyle\frac{1}{4}\,(\ip{\pi(s)\,(u+v),u+v}-\ip{\pi(s)\,(u-v),u-v}
\\&\ \ \ \ -i\ip{\pi(s)\,(u+iv),u+iv}+i\,\ip{\pi(s)\,(u-iv),u-iv})
\\&=\textstyle\frac{1}{4}\,(\ip{u+v,\pi(s^{-1})\,(u+v)}-\ip{u-v,\pi(s^{-1})\,(u-v)}
\\&\ \ \ \ -i\,\ip{u+iv,\pi(s^{-1})\,u+iv}+i\,\ip{u-iv,\pi(s^{-1})\,u-iv})
\\&=\ip{u,\pi(s^{-1})\,v}
\end{align*}
for all ${v\in E}$ and ${s\in G}$. 
\end{proof}

\begin{lemma}\label{pssf-pos-def-unitary-equiv} Let $\pi$ be a linear representation of $G$ on $V$ and ${\ip{\cdot,\cdot}}$ be a ${B(\hh)}$-valued positive semi-definite left-sesquilinear form on $V$. If the map ${\ip{\pi(\,\cdot\,)\,u,u}}$ from $G$ to $B(\hh)$ is positive definite  for all ${u\in V}$, then 
\[
\ip{\pi(s)\,u,v}=\ip{u,\pi(s^{-1})\,v}
\]
for all ${u,v\in X}$ and ${s\in G}$. 
\end{lemma}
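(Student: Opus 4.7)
The plan is to derive this lemma as an immediate corollary of the preceding Lemma \ref{pssf-pos-def-unitary-lemma} by specializing the subset $X \subset V$ to $X = V$ itself. First I observe that $V$, being a complex vector space, is trivially closed under multiplication by $i$, so it is an admissible choice of $X$ in the hypothesis of Lemma \ref{pssf-pos-def-unitary-lemma}.

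Next, I fix an arbitrary $u \in V$ and verify the positivity hypothesis of Lemma \ref{pssf-pos-def-unitary-lemma} relative to this choice of $X$: for every $v \in V$, the element $u+v$ again lies in $V$, so by the current hypothesis the map ${s \mapsto \ip{\pi(s)(u+v),u+v}}$ from $G$ to $B(\hh)$ is positive definite. Thus the precise hypothesis of Lemma \ref{pssf-pos-def-unitary-lemma} is met with this $u$ and $X = V$, yielding
\[
\ip{\pi(s)\,u,v}=\ip{u,\pi(s^{-1})\,v}
\]
for every $v \in V$ and $s \in G$. Since $u \in V$ was arbitrary, the claimed identity holds for all $u, v \in V$ and $s \in G$.

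There is essentially no obstacle here beyond recognizing that the previous lemma was formulated in enough generality to apply directly: the role of the auxiliary subset $X$ in that statement was to accommodate situations where one only has positivity of ${\ip{\pi(\cdot)(u+v),u+v}}$ for a restricted family of $v$, whereas in the present setting positivity holds globally on $V$. The polarization identity that powered the proof of Lemma \ref{pssf-pos-def-unitary-lemma} does all of the real work; no further computation is needed.
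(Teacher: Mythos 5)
Your proof is correct and matches the paper's own argument, which likewise just applies Lemma \ref{pssf-pos-def-unitary-lemma} with $X=V$. The extra sentences spelling out why $V$ is stable under multiplication by $i$ and why the positivity hypothesis is met are a harmless elaboration of the same one-line idea.
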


\begin{proof}
It suffices to apply Lemma \ref{pssf-pos-def-unitary-lemma} with ${X=V}$. 
\end{proof}

\begin{proposition}\label{P - Amenable positive semi-definite}
Let $G$ be a  group and $M$ be a von Neumann algebra. The following are equivalent: 
\begin{enumerate}
\item $G$ is amenable;
\item  there exists an $ M$-valued positive semi-definite left-sesquilinear form ${\ip{\cdot,\cdot}}$ on ${\ell^\infty(G, M)}$ and ${\kappa>0}$ such that 
\begin{enumerate}
\item ${\ip{1_ M,1_ M}\neq0}$
\item ${\|{\ip{\p,\p}}\|\leq\kappa\,\|{\p}\|^2}$ for all ${\p\in\ell^\infty(G, M)}$
\item ${\ip{\lambda(\,\cdot\,)\,\p,\p}}$ is positive definite for all ${\p\in\ell^\infty(G, M)}$
\end{enumerate}
\end{enumerate}
\end{proposition}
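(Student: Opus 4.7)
The plan is to handle the two implications separately. For $(1) \Rightarrow (2)$, given an invariant mean $\IE\colon \ell^\infty(G,M)\to M$ (which exists by Lemma \ref{L - mean-ext}), I would define
\[
\ip{\varphi,\psi} := \IE_x\bigl(\varphi(x)^*\,\psi(x)\bigr),
\]
just as in the proof of Proposition \ref{P - semi-unitarizable}. This is manifestly $M$-valued, positive semi-definite, and conjugate-linear in the first variable, linear in the second (hence left-sesquilinear in the paper's convention). Condition (a) is immediate from $\IE(\1)=1_M$; condition (b) holds with $\kappa=1$ using positivity of $\IE$ and $\|\IE\|=1$. For (c), a direct computation using left-invariance gives $\ip{\lambda(s)\varphi,\varphi}=\IE_y(\varphi(y)^*\varphi(sy))$; then for any finite $s_1,\ldots,s_n\in G$ and $a_1,\ldots,a_n\in M$, applying left-invariance one index at a time yields
\[
\sum_{i,j} a_i^*\,\ip{\lambda(s_i^{-1}s_j)\varphi,\varphi}\,a_j = \IE_y\Bigl(\bigl(\textstyle\sum_i\varphi(s_iy)a_i\bigr)^*\bigl(\sum_j\varphi(s_jy)a_j\bigr)\Bigr)\ \geq\ 0,
\]
which is the required positive-definiteness.

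For the converse $(2) \Rightarrow (1)$, the key input is Lemma \ref{pssf-pos-def-unitary-equiv}: condition (c) implies the adjointness relation $\ip{\lambda(s)\varphi,\psi}=\ip{\varphi,\lambda(s^{-1})\psi}$ for all $\varphi,\psi\in \ell^\infty(G,M)$ and $s\in G$. I would then define
\[
\IE(\varphi) := \ip{\1,\varphi},
\]
which is linear in $\varphi$. Using adjointness together with the identity $\ip{u,v}=\ip{v,u}^*$ (valid for positive semi-definite forms), and the obvious equality $\lambda(s)\1=\1$, I get
\[
\IE(\lambda(s)\varphi) = \ip{\1,\lambda(s)\varphi} = \ip{\lambda(s)\varphi,\1}^* = \ip{\varphi,\lambda(s^{-1})\1}^* = \ip{\varphi,\1}^* = \ip{\1,\varphi} = \IE(\varphi),
\]
which is exactly the left-invariance in the sense of Lemma \ref{L - mean-ext}(3) (setting $x=s^{-1}$). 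Boundedness of $\IE$ comes from the $B(\hh)$-valued Cauchy--Schwarz inequality combined with condition (b): $\|\IE(\varphi)\|\leq \|\ip{\1,\1}\|^{1/2}\|\ip{\varphi,\varphi}\|^{1/2}\leq \kappa\|\varphi\|$. Finally, $\IE(\1)=\ip{\1,\1}\neq 0$ by (a). Applying Lemma \ref{L - mean-ext}$(3)\!\Rightarrow\!(1)$ concludes that $G$ is amenable.

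The main potential obstacle is the direction $(2)\Rightarrow(1)$, specifically pulling \emph{invariance} out of the weaker hypothesis that $\ip{\lambda(\cdot)\varphi,\varphi}$ is merely positive definite (rather than the form itself being $\lambda$-invariant as in Proposition \ref{P - semi-unitarizable}). The right trick is to exploit the adjointness furnished by Lemma \ref{pssf-pos-def-unitary-equiv} together with the Hermitian symmetry of positive semi-definite forms; once that is in hand, the rest is a straightforward assembly via Lemma \ref{L - mean-ext}.
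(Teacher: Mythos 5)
Your overall architecture is the same as the paper's: the forward direction uses the form $\ip{\p,\psi}=\IE_x\,\p(x)^*\psi(x)$ built from an invariant mean on $\ell^\infty(G,M)$, and the converse extracts invariance of $\IE(\p):=\ip{\1,\p}$ from the adjointness relation of Lemma \ref{pssf-pos-def-unitary-equiv}, gets boundedness from the module Cauchy--Schwarz inequality together with (b), and concludes via Lemma \ref{L - mean-ext}. Your converse is correct and is essentially the argument the paper writes out in detail in Prop.\ \ref{P - Amenable positive semi-definite invertible}; the detour through Hermitian symmetry $\ip{u,v}=\ip{v,u}^*$ is harmless, though you could apply the adjointness relation directly as $\ip{\1,\lambda(s)\p}=\ip{\lambda(s^{-1})\1,\p}$.

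The one genuine problem is the displayed identity in your verification of (c). With $(\lambda(x)\p)(y)=\p(x^{-1}y)$ and left invariance $\IE_y f(y)=\IE_y f(zy)$, the correct computation is
\[
\ip{\lambda(s_i^{-1}s_j)\p,\p}=\IE_y\,\p(s_j^{-1}s_iy)^*\,\p(y)=\IE_y\,\p(s_j^{-1}y)^*\,\p(s_i^{-1}y),
\]
not $\IE_y\,\p(s_iy)^*\,\p(s_jy)$: passing from one to the other requires commuting $s_i$ past $s_j$, so your identity fails for noncommutative $G$, and the sum $\sum_{i,j}a_i^*\ip{\lambda(s_i^{-1}s_j)\p,\p}\,a_j$ does not collapse to $\IE_y\bigl(\sum_i\p(s_iy)a_i\bigr)^*\bigl(\sum_j\p(s_jy)a_j\bigr)$. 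The matrix you actually obtain, $\bigl(\IE_y\,g_j(y)^*g_i(y)\bigr)_{ij}$ with $g_i(y)=\p(s_i^{-1}y)$, is the \emph{transpose} of an operator Gram matrix, and transposes of positive operator matrices need not be positive (think of the Choi matrix of the transpose map). What is genuinely positive definite under these conventions is $\ip{\p,\lambda(\cdot)\p}$, equivalently $\ip{\lambda(\cdot^{-1})\p,\p}$ --- which is precisely the version the paper uses in the proof of Prop.\ \ref{amenable-crit}, and the reason for the remark after Prop.\ \ref{P - Amenable positive semi-definite invertible} that $\lambda(\cdot)$ may be replaced by $\lambda(\cdot^{-1})$. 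So you should either redo the computation for $\ip{\lambda(\cdot^{-1})\p,\p}$ or read condition (c) with that convention; the paper itself sidesteps the computation by citing the proof of Prop.\ 2.2 of \cite{dcot19}, which establishes positive definiteness of $x\mapsto\IE_y\,\p(xy)\,\p(y)^*$.
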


\begin{proof}
Suppose that $G$ is amenable. Then  
\[
\ip{\p ,\psi}:=\IE_x \p (x)^*\psi(x)
\] 
is an invariant continuous non-degenerate positive $M$-valued semi-definite sesquilinear on $\ell^\infty(G,M)$. As
\[
\ip{\lambda(x)\,\p,\p}=\IE_{y}(\lambda(x)\,\p)(y)^*\p(y)=\IE_{y}\p(x^{-1}y)^*\,\p(y)=\IE_{y}\p(y)^*\,\p(xy)
\]
for all ${\p\in\ell^\infty(G,\M)}$ and ${x\in G}$, it follows by the proof of Prop.\ 2.2 in 
\cite{dcot19}  that  $\ip{\lambda(\cdot )\,\p,\p}$ is positive definite  for all ${\p\in\ell^\infty(G,\M)}$.

The converse follows by Lemma \ref{pssf-pos-def-unitary-equiv} and Prop.\ \ref{P - semi-unitarizable}; we shall provide the details for a strengthening of this implication in Prop. \ref{P - Amenable positive semi-definite invertible} below.
\end{proof}

\begin{remark}
In connection with this, an equivalence between the invariance of an $M$-valued positive semi-definite sesquilinear form and positive definiteness of the corresponding $M$-valued coefficients can be established for  general representations; the proper context for such a statement is that of pre-Hilbert $C^*$-modules.  On the other hand, it is well--known  that the coefficients of a unitary representation of a group $G$ (in the case of trivial coefficients, i.e., $M=\CI$) are positive definite functions on $G$ (see \cite[Prop.\ C.4.3]{bhv}). 
\end{remark}

\begin{corollary}
Let $G$ be a group. The following are equivalent: 
\begin{enumerate}
\item $G$ is amenable;
\item there exists a positive semi-definite sesquilinear form ${\ip{\cdot,\cdot}}$ on ${\ell^\infty(G)}$ and ${\kappa>0}$ such that 
\begin{enumerate}
\item ${\ip{\1,\1}\neq0}$;
\item ${|{\ip{f,f}}|\leq\kappa\,\|{f}\|_\infty^2}$ for all ${f\in\ell^\infty(G)}$;
\item ${\ip{\lambda(\,\cdot\,)\,f,f}}$ is positive definite for all ${f\in\ell^\infty(G)}$.
\end{enumerate}
\end{enumerate}
\end{corollary}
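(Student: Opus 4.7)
The plan is to obtain this corollary as the specialization of Proposition \ref{P - Amenable positive semi-definite} to the von Neumann algebra $M = \mathbb{C}$. Under this identification, $\ell^\infty(G, M)$ is canonically identified with $\ell^\infty(G)$, the operator norm on $M$ is the modulus, and an $M$-valued positive semi-definite left-sesquilinear form on $\ell^\infty(G, M)$ is precisely a complex-valued positive semi-definite sesquilinear form on $\ell^\infty(G)$. The unit $1_M \in \ell^\infty(G, M)$ becomes the constant function $\1 \in \ell^\infty(G)$, so conditions (a), (b), (c) of the proposition translate word-for-word into the corresponding conditions of the corollary; in particular the positive definiteness of $\ip{\lambda(\,\cdot\,)\,\p, \p}$ as a $B(\hh)$-valued map on $G$ collapses to the classical notion of scalar-valued positive definiteness on $G$.

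For concreteness, the forward implication (1) $\Rightarrow$ (2) can be produced directly from any invariant mean $\IE$ on $\ell^\infty(G)$ by setting
\[
\ip{f, g} := \IE_x \overline{f(x)}\, g(x),
\]
which immediately yields $\ip{\1, \1} = 1 \neq 0$ and the bound $|\ip{f, f}| \leq \|f\|_\infty^2$, and for which the computation
\[
\ip{\lambda(x)\, f, f} = \IE_y \overline{f(x^{-1}y)}\, f(y) = \IE_y \overline{f(y)}\, f(xy)
\]
exhibits $x \mapsto \ip{\lambda(x)\,f, f}$ as a positive definite function on $G$ by the argument used in the proof of Prop.\ \ref{P - Amenable positive semi-definite} (cf.\ Prop.\ 2.2 of \cite{dcot19}).

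The converse (2) $\Rightarrow$ (1) follows exactly as in the proposition: condition (c) together with Lemma \ref{pssf-pos-def-unitary-equiv} applied to $\pi = \lambda$ shows that the form $\ip{\cdot, \cdot}$ is $\lambda$-invariant, so the linear functional $f \mapsto \ip{\1, f}$ is bounded (by (b)), non-zero (by (a)), and left-invariant on $\ell^\infty(G)$, whence $G$ is amenable. No essential obstacle is expected, since the corollary is simply the scalar coefficient case $M = \mathbb{C}$ of the already established vector-valued statement.
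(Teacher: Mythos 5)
Your proof is correct and takes essentially the same approach as the paper: the corollary is stated there without a separate proof precisely because it is the scalar case $M=\CI$ of Proposition \ref{P - Amenable positive semi-definite}, and your specialization (including the direct construction $\ip{f,g}=\IE_x\overline{f(x)}\,g(x)$ for the forward direction and the use of Lemma \ref{pssf-pos-def-unitary-equiv} for the converse) matches the arguments the paper gives for that proposition.
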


\section{Remarks on Prop.\ \ref{P - semi-unitarizable} and Prop.\ \ref{P - Amenable positive semi-definite}}

As mentioned in the introduction, one difference between the stability condition and the two characterizations of amenability in Prop.\ \ref{P - semi-unitarizable} and Prop.\ \ref{P - Amenable positive semi-definite}, is  that in the latter cases amenability can be used to transform an arbitrary uniformly bounded map (which needs not be an $\e$-representation for small $\e$) into a positive definite map. 
 If the set of $\e$-representations of a group $G$ is too ``thin'', one may not be able to reconstruct an invariant mean from the equivalent stability conditions stated in Prop.\ \ref{stability-equiv}.  

On the other hand, the full set $\ell^\infty(G,M)$ of maps is not required to prove the two propositions. For example, it is sufficient to consider maps $\p\colon G \to \GL_1(M)$, where $\GL_1(M)$ denotes the set of invertible operators, to establish Prop.\ \ref{P - semi-unitarizable}: 

\begin{proposition}\label{P - Amenable positive semi-definite invertible - 1}
Let $G$ be a  group and $M$ be a von Neumann algebra. The following are equivalent: 
\begin{enumerate}
\item $G$ is amenable;
 \item  
 there exists a continuous nondegenerate $M$-valued 
positive semi-definite sesquilinear form $\ip{\,,\,}\colon \ell^\infty(G,M)\times \ell^\infty(G,M)\to M$ such that 
\[
\ip{\lambda_x \p,\lambda_x\psi}=\ip{\p,\psi}
\]
for all $x\in G$ and all $\p,\psi\in \ell^\infty(G,\GL_1(M))$;
\end{enumerate}
\end{proposition}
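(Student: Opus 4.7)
The plan is to handle the two directions separately. For $(1) \Rightarrow (2)$, if $G$ is amenable with invariant mean $\IE$ on $\ell^\infty(G, M)$ furnished by Lemma \ref{L - mean-ext}, then the form $\ip{\p,\psi} := \IE_x\,\p(x)^*\psi(x)$, already shown in the proof of Prop.\ \ref{P - semi-unitarizable} to be continuous, nondegenerate, positive semi-definite, and sesquilinear, is fully invariant on $\ell^\infty(G,M)\times\ell^\infty(G,M)$; in particular it is invariant on the subdomain $\ell^\infty(G,\GL_1(M))\times\ell^\infty(G,\GL_1(M))$, yielding (2).

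For the converse, I would verify condition (3) of Lemma \ref{L - mean-ext} for the bounded linear map $\IE\colon \ell^\infty(G,M) \to M$ defined by $\IE(\p) := \ip{\1, \p}$. Boundedness is immediate from continuity of $\ip{\cdot,\cdot}$, and since $\1$ is a $G$-invariant vector in $\ell^\infty(G,M)$, nondegeneracy gives $\IE(\1) = \ip{\1,\1} \neq 0$.

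The heart of the argument is establishing left-invariance of $\IE$ using only the partial invariance hypothesis, which covers pairs of $\GL_1(M)$-valued maps. I propose a shift trick: given $\p \in \ell^\infty(G, M)$ and $r > \|\p\|$, the map $\p + r\1$ takes values in $\GL_1(M)$, since $r\,\Id_\hh + \p(y)$ is invertible whenever $\|\p(y)\| < r$ (its spectrum lies in the shifted disk, bounded away from $0$). Writing $\p = (\p + r\1) - r\1$, applying the assumed invariance to the pair $(\1, \p + r\1) \in \ell^\infty(G,\GL_1(M))^2$, and using $\lambda_x\1 = \1$ together with $\C$-linearity in each argument, one obtains
\begin{align*}
\ip{\1, \lambda_x\p} &= \ip{\1, \lambda_x(\p + r\1)} - r\,\ip{\1, \1}\\
&= \ip{\lambda_x\1, \lambda_x(\p + r\1)} - r\,\ip{\1, \1}\\
&= \ip{\1, \p + r\1} - r\,\ip{\1, \1} = \ip{\1,\p},
\end{align*}
i.e.\ $\IE(\lambda_x\p) = \IE(\p)$, as required.

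The main subtlety is noticing that $\ell^\infty(G,\GL_1(M))$ $\C$-linearly spans $\ell^\infty(G,M)$ via constant shifts, so that the partial invariance propagates to full invariance by linearity; once this is in place, the result reduces directly to Lemma \ref{L - mean-ext}. I do not anticipate any serious obstacle beyond this observation.
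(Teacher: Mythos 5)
Your proof is correct and follows the same overall route as the paper: define $\IE(\p):=\ip{\1,\p}$, note that it is bounded with $\IE(\1)=\ip{\1,\1}\neq 0$, establish left invariance from the partial invariance hypothesis, and invoke Lemma \ref{L - mean-ext}. The only (minor) divergence is in how the partial invariance is propagated to all of the domain: the paper restricts to nowhere-vanishing scalar-valued functions in $\ell^\infty(G)\subset\ell^\infty(G,M)$ and extends by boundedness, whereas you use the constant-shift decomposition $\p=(\p+r\1)-r\1$ with $r>\|\p\|$ together with linearity --- which is essentially the same device the paper itself deploys in the proof of the companion Prop.\ \ref{P - Amenable positive semi-definite invertible}.
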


\begin{proof}
The direct implication follows by Prop.\ \ref{P - semi-unitarizable}. Conversely, using the notation of Prop.\ \ref{P - semi-unitarizable}, the  restriction of the linear map $\IE(\p ):=\ip{\1,\p }$ to $\ell^\infty(G)$ 
satisfies $\IE(\lambda_x\p )=\IE(\p )$
for every  $x\in G$ and every $\p \in \ell^\infty(G)$ such that $\p(y)\neq 0$ for every $y\in G$. Since $\IE$ is  bounded, this equality holds for every $\p \in \ell^\infty(G)$,  and since $\IE(\1)\neq 0$, it follows  that $G$ is amenable.
\end{proof}

We shall require the following strengthening of  Prop.\ \ref{P - Amenable positive semi-definite} in the proof of  Prop.\ \ref{P - nonamenable intro} and Prop.\ \ref{amenable-crit}   (see   \ts\ref{S - amenable 2} and \ts\ref{S - restricting to epsilon rep}). 

\begin{proposition}\label{P - Amenable positive semi-definite invertible}
Let $G$ be a  group and $M$ be a von Neumann algebra. The following are equivalent: 
\begin{enumerate}
\item $G$ is amenable;

\item   there exists a continuous nondegenerate $M$-valued 
positive semi-definite sesquilinear form $\ip{\,,\,}\colon \ell^\infty(G,M)\times \ell^\infty(G,M)\to M$ such that 
 ${\ip{\lambda(\,\cdot\,)\,\p,\p}}$ is positive definite for all ${\p\in\ell^\infty(G, \GL_1(M))}$.
\end{enumerate}
\end{proposition}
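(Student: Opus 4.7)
My plan is to combine the invariant-mean construction from Prop.\ \ref{P - Amenable positive semi-definite} with Lemma \ref{pssf-pos-def-unitary-lemma}, which is precisely designed to extract a weak form of invariance from positive definiteness on a restricted set of vectors.

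The forward implication $(1) \Rightarrow (2)$ is a direct reuse of Prop.\ \ref{P - Amenable positive semi-definite}: given an invariant mean $\IE$ on $\ell^\infty(G, M)$, the form $\ip{\p, \psi} := \IE_x\,\p(x)^*\psi(x)$ is $M$-valued, (left-)sesquilinear, positive semi-definite, continuous (bounded by $\|\p\|\,\|\psi\|$), and satisfies $\ip{\1,\1} = \IE(\1) \neq 0$. The proof of Prop.\ \ref{P - Amenable positive semi-definite} shows that $\ip{\lambda(\cdot)\p, \p}$ is positive definite for every $\p \in \ell^\infty(G, M)$, and in particular for every $\p \in \ell^\infty(G, \GL_1(M))$, so (2) holds.

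For the converse $(2) \Rightarrow (1)$, my goal is to show that $\IE(\p) := \ip{\1, \p}$ is a nonzero, bounded, left-invariant linear map $\ell^\infty(G, M) \to M$, after which Lemma \ref{L - mean-ext}(3) delivers amenability. Boundedness follows from continuity of $\ip{\cdot,\cdot}$, and $\IE(\1) = \ip{\1, \1} \neq 0$ is the nondegeneracy hypothesis. The only nontrivial point is left invariance:
\[
\ip{\1, v} = \ip{\1, \lambda_{s^{-1}} v}, \qquad \forall\, v \in \ell^\infty(G, M),\ \forall\, s \in G.
\]
To get this I would introduce the set $X := \{v \in \ell^\infty(G, M) : \|v\| < 1\}$, which is stable under multiplication by $i$ since $\|iv\| = \|v\|$. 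For each $v \in X$, the pointwise Neumann series gives $\Id_\hh + v(x) \in \GL_1(M)$ for every $x \in G$, so $\1 + v \in \ell^\infty(G, \GL_1(M))$, and hence $\ip{\lambda(\cdot)(\1 + v), \1 + v}$ is positive definite by the hypothesis of $(2)$. Applying Lemma \ref{pssf-pos-def-unitary-lemma} with $u = \1$, $\pi = \lambda$ and this $X$ yields $\ip{\lambda_s\1, v} = \ip{\1, \lambda_{s^{-1}} v}$ for every $v \in X$ and $s \in G$; since $\lambda_s\1 = \1$ this gives the desired identity on $X$, and it extends to all of $\ell^\infty(G, M)$ by a positive real rescaling of $v$ (using linearity of the form in its second argument).

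The main conceptual difficulty is that $\ell^\infty(G, \GL_1(M))$ is neither a linear subspace nor stable under multiplication by $i$, so one cannot polarize the restricted positive definiteness hypothesis directly. The workaround is to find a translate of an $i$-stable subset inside $\ell^\infty(G, \GL_1(M))$; the open unit ball $X$, translated by $\1$, does exactly this, and Lemma \ref{pssf-pos-def-unitary-lemma} then transfers the hypothesis into precisely the amount of invariance needed to recover a left-invariant mean.
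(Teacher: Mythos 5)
Your proposal is correct and follows essentially the same route as the paper's proof: the forward direction is quoted from Prop.\ \ref{P - Amenable positive semi-definite}, and the converse uses the same open unit ball $X$, the same observation that $\1+v$ lands in $\ell^\infty(G,\GL_1(M))$, and the same application of Lemma \ref{pssf-pos-def-unitary-lemma} with $u=\1$ to recover left invariance of $\p\mapsto\ip{\1,\p}$ before invoking Lemma \ref{L - mean-ext}. Your closing remark about translating an $i$-stable set into $\ell^\infty(G,\GL_1(M))$ is exactly the idea the paper's argument is built on.
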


It is clear that this proposition holds identically with $\lambda(\cdot)$ replaced by $\lambda(\cdot^{-1})$.

\begin{proof}
The direct implication follows by Prop.\ \ref{P - Amenable positive semi-definite}. Conversely,   let ${\Psi:\ell^\infty(G,M)\to M}$ be the linear map defined by 
\[
\Psi(\p)=\ip{\1,\p}
\]
Let 
\[
{ X=\{\p\in\ell^\infty(G,M):\|{\p}\|<1\}}
\]
As ${\|{\p(x)}\|<1}$ for all ${\p\in X}$ and ${x\in G}$, in turn ${\1+\p(x)}$ is invertible for all ${\p\in X}$ and ${x\in G}$, and thus ${\ip{\lambda(\,\cdot\,)(\1+\p),\1+\p}}$ is positive definite for all ${\p\in X}$. As $ X$ is stable under multiplication by $i$, it follows by Lemma \ref{pssf-pos-def-unitary-lemma} that 
\[
{\Psi(\lambda(s)\,\p)=\ip{\1,\lambda(s)\,\p}=\ip{\lambda(s^{-1})\,\1,\p}=\ip{\1,f}=\Psi(\p)}
\]
for all ${s\in G}$ and ${\p\in X}$. By linearity, it follows that ${\Psi(\lambda(s)\,\p)=\Psi(\p)}$ for all ${s\in G}$ and ${\p\in\ell^\infty(G,M)}$, and thus $\Psi$ is left invariant on ${\ell^\infty(G,M)}$. Furthermore, it follows by the Cauchy-Schwarz inequality and Prop.\ 2.3 in \cite{paschke73} that 
\[
{|{\Psi(\p)}|\leq\|{\ip{\1,\p}}\|\,\|{\eta}\|^2\leq\|{\ip{\1,\1}}\|^{1/2}\,\|{\ip{\p,\p}\|^{1/2}\leq\kappa\,\|{\p}\|}}
\]
for all ${\p\in\ell^\infty(G,M)}$, and thus $\Psi$ is bounded. As $\Psi(\1)\neq 0$, it follows by  Lemma \ref{L - mean-ext} that $G$ is amenable. 
\end{proof}

\begin{remark}
 The arguments rely on the fact that the set of  maps $\p\colon G\to \GL_1(M)$ is large enough to generate $\ell^\infty(G,M)$, and  they can  be extended to other sets of maps with similar properties (e.g., the set of $M$-valued unitary maps).  
\end{remark}

\section{A positive definite characterization of the invariance of a mean}\label{S - amenable 2} 
The following result characterizes amenability as a property that yields for every uniformly
 bounded map $\p$ into a positive definite map $\psi$ which is close to it if $\p$ is $\e$-multiplicative:

\begin{proposition}\label{amenable-crit}
Let $G$ be a  group and $M$ be a von Neumann algebra. The following are equivalent: 
\begin{enumerate}
\item $G$ is amenable
\item there exists a state ${\tau\in\mathrm{St}(\ell^\infty(G))}$ such that for every every uniformly bounded map ${\p:G\to M}$, there exists a positive definite map ${\psi:G\to M}$ such that 
\[
{\Phi(\p(x)^*\,\psi(x))=\tau_y\,\Phi(\p(x)^*\,\p(xy)\,\p(y)^*)}
\]
for all ${x\in G}$ and every normal linear functional ${\Phi\in M_*}$
\end{enumerate}
\end{proposition}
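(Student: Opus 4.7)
The plan is to prove $(1)\impl(2)$ via the Chiffre--Ozawa--Thom construction and to prove $(2)\impl(1)$ by reducing to Prop.~\ref{P - Amenable positive semi-definite invertible}. For the forward direction, amenability of $G$ furnishes (via Lemma~\ref{L - mean-ext}) an invariant mean $\IE\colon\ell^\infty(G,M)\to M$; its restriction to $\ell^\infty(G)$ will serve as the state $\tau$, and $\IE$ is related to $\tau$ by $\Phi\circ\IE=\tau_y\Phi$ for every $\Phi\in M_*$. Given uniformly bounded $\p\colon G\to M$, I set $\psi(x):=\IE_y\bigl(\p(xy)\,\p(y)^*\bigr)$, which is positive definite by Prop.\ 2.2 of \cite{dcot19} as recalled in the introduction. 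The equation in~(2) is obtained by applying $\Phi$ and using the $M$-bimodule property of $\IE$ to move the fixed operator $\p(x)^*$ inside the mean:
\[
\Phi\bigl(\p(x)^*\,\psi(x)\bigr)=\Phi\bigl(\IE_y(\p(x)^*\,\p(xy)\,\p(y)^*)\bigr)=\tau_y\Phi\bigl(\p(x)^*\,\p(xy)\,\p(y)^*\bigr).
\]

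For the converse, I first extend the given $\tau$ to a bounded linear map $\IE\colon\ell^\infty(G,M)\to M$ by setting $\Phi(\IE(\p)):=\tau_y\Phi(\p(y))$ for $\Phi\in M_*$; since $\tau$ is a state, $\IE(\1)=\Id_\hh$. Now suppose $\p\in\ell^\infty(G,\GL_1(M))$ is pointwise invertible. Applying the equation in~(2) to every $\Phi\in M_*$ and using that $M_*$ separates points of $M$ yields $\p(x)^*\,\psi(x)=\p(x)^*\,\IE_y(\p(xy)\p(y)^*)$ in $M$, and invertibility of $\p(x)^*$ then forces $\psi(x)=\IE_y(\p(xy)\p(y)^*)$. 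Condition~(2) therefore asserts that $x\mapsto\IE_y(\p(xy)\p(y)^*)$ is positive definite for every such $\p$. Applying the same reasoning with $\p$ replaced by $\tilde\p(y):=\p(y)^*$ (still in $\ell^\infty(G,\GL_1(M))$), I deduce the analogous positive definiteness in $x$ of $\IE_y\bigl(\p(xy)^*\,\p(y)\bigr)$.

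The last step is to invoke Prop.~\ref{P - Amenable positive semi-definite invertible} in the $\lambda(\cdot^{-1})$ variant given by the remark immediately following its statement. I define the left-sesquilinear $M$-valued form
\[
\ip{\p,\psi}:=\IE_y\bigl(\p(y)^*\,\psi(y)\bigr)
\]
on $\ell^\infty(G,M)$: positivity of $\IE$ gives positive semi-definiteness, $\|\IE\|\leq 1$ yields the continuity bound $\|\ip{\p,\p}\|\leq\|\p\|^2$, and $\ip{\1,\1}=\IE(\1)=\Id_\hh\neq 0$ gives nondegeneracy. A direct computation shows $\ip{\lambda(x^{-1})\p,\p}=\IE_y\bigl(\p(xy)^*\,\p(y)\bigr)$, so by the previous paragraph this is positive definite in $x$ for every $\p\in\ell^\infty(G,\GL_1(M))$, and amenability of $G$ follows. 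The main subtlety to navigate is matching the handedness of the expression $\p(x)^*\,\p(xy)\,\p(y)^*$ in condition~(2) with the left-sesquilinear convention used in the earlier propositions: the passage to $\tilde\p=\p^*$, together with the use of $\lambda(\cdot^{-1})$ in place of $\lambda(\cdot)$, is precisely what bridges the two conventions.
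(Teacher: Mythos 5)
Your proof is correct and follows essentially the same route as the paper: the forward direction is the standard Chiffre--Ozawa--Thom mean construction, and the converse extends $\tau$ to $\Psi_\tau\colon\ell^\infty(G,M)\to M$, uses pointwise invertibility plus separation by $M_*$ to identify $\psi(x)$ with $(\Psi_\tau)_y(\p(xy)\p(y)^*)$, applies condition (2) to $\p^*$ to fix the handedness, and feeds the resulting positive definiteness of $\ip{\lambda(\cdot^{-1})\p,\p}$ into Prop.~\ref{P - Amenable positive semi-definite invertible}. The only difference is cosmetic: you spell out the $(1)\Rightarrow(2)$ direction that the paper dismisses as standard, and your intermediate application of (2) to $\p$ itself (before passing to $\p^*$) is harmless but not needed.
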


\begin{proof}
The proof of the implication ${(1)\Rightarrow(2)}$ is standard. Suppose that ${(2)}$ holds and let ${\tau\in\mathrm{St}(\ell^\infty(G))}$ be a state witnessing ${(2)}$. We let ${\Psi_\tau:\ell^\infty(G,M)\to M}$ be the positive bounded linear map defined by 
\[
\Phi(\Psi_\tau(\p))=\tau_x\,\Phi(\p(x))
\]
for ${\Phi\in M_*}$ and ${\p\in\ell^\infty(G,M)}$, where we identify $M$ with ${(M_*)^*}$. 
Let ${\ip{\cdot,\cdot}}$ be the $M$-valued positive semi-definite left-sesquilinear form on ${\ell^\infty(G,M)}$ defined by 
\[
\ip{\p,\psi}=\Psi_\tau(\p^*\psi).
\]
By Prop.\ \ref{P - Amenable positive semi-definite invertible}, it suffices to show that  ${\ip{\lambda(\,\cdot\,^{-1})\,\p,\p}}$ is a positive definite map on $G$ for all ${\p\in\ell^\infty(G, \GL_1(M))}$.

Let ${\p\in\ell^\infty(G, \GL_1(M))}$. By assumption, there exists a positive definite map ${\psi:G\to M}$ such that 
\[
{\Phi(\p(x)\,\psi(x))=\Phi((\Psi_\tau)_y\p(x)\,\p(xy)^*\,\p(y))}=\Phi(\p(x)(\Psi_\tau)_y\p(xy)^*\,\p(y))
\]
for all ${x\in G}$ and every normal linear functional ${\Phi\in M_*}$. Since $\p(x)$ is invertible for all $x\in G$, it follows that
\[
\psi(x^{-1})=(\Psi_\tau)_y\,\p(x^{-1}y)^*\,\p(y)={\ip{\lambda(\,\cdot\,)\,\p,\p}}.
\]
This shows that ${\ip{\lambda(\,\cdot\,^{-1})\,\p,\p}}$ is positive definite.
\end{proof}

\begin{remark}\label{R - p close to psi}
Suppose  ${\tau\in\mathrm{St}(\ell^\infty(G))}$ is a state and $\p\colon G\to U(\hh)$ is a unitary map, $\psi\colon G\to B(\hh)$ is a uniformly bounded map such that the identity
\[
{\Phi(\p(x)^*\,\psi(x))=\tau_y\,\Phi(\p(x)^*\,\p(xy)\,\p(y)^*)}
\]
is verified for all ${x\in G}$ and every normal linear functional ${\Phi\in B(\hh)^*}$; then 
\[
\|{\p(x)-\psi(x)}\|\leq \sup_{y\in G}\|{\p(x)\,\p(y)-\p(xy)}\| 
\]
for all $x\in G$. In particular, if $\p$ is an $\e$-representation then 
\[
\|\p-\psi\|\leq \e.
\]
\end{remark}

\begin{proof} We have
\[
{|{\Phi(\Id_\hh-\p(x)^*\,\psi(x))}|=|{\tau_y\,\Phi(\Id_\hh-\p(x)^*\,\p(xy)\,\p(y)^*)}}|
\]
for all ${x\in G}$ and every normal linear functional ${\Phi\in B(\hh)^*}$, and thus 
\begin{align*}
\|{\p(x)-\psi(x)}\|&=\|{\Id_\hh-\p(x)^*\,\psi(x)}\|\\
&=\sup\{|{\Phi(\Id_\hh-\p(x)^*\,\psi(x))|}:\Phi\in B(\hh)_*,\,\|{\Phi\|}\leq1\}\\
&=\sup\{|{\tau_y\,\Phi(\Id_\hh-\p(x)^*\,\p(xy)\,\p(y)^*)|}:\Phi\in B(\hh)_*,\,\|{\Phi}\|\leq1\}\\
&\leq\sup\{\|{\Id_\hh-\p(x)^*\,\p(xy)\,\p(y)^*}\|:y\in G\}\\
&=\sup\{\|{\p(x)\,\p(y)-\p(xy)}\|:y\in G\}
\end{align*}
for all ${x\in G}$. 
\end{proof}

\begin{question}
Does there exist a non-amenable group $G$ such that  for every von Neumann algebra $M$ and every uniformly bounded map ${\p:G\to U(M)}$, there exists a positive definite map ${\psi:G\to M}$ such that 
\[
\|{\p(x)-\psi(x)}\|\leq \sup_{y\in G}\|{\p(x)\,\p(y)-\p(xy)}\| 
\]
for all $x\in G$? 
\end{question}

\section{Restricting to $\e$-representations}\label{S - restricting to epsilon rep}

We now consider the situation where we restrict the set of maps $\p\colon G\to B(\hh)$ for which a positive definite map $\psi$ such that
\[
{\Phi(\p(x)^*\,\psi(x))=\tau_y\,\Phi(\p(x)^*\,\p(xy)\,\p(y)^*)}\ \ \forall x\in G, \ \forall \Phi\in B(\hh)_*
\]
must exist to the set of $\delta$-representations $\p\colon G\to U(\hh)$ for some $\delta>0$.

\begin{proposition} 
Let $G$ be a group and $\hh$ be a Hilbert space.
Suppose that there exists a state ${\tau\in\mathrm{St}(\ell^\infty(G))}$ and $0<\delta<1$ such that for every   $\delta$-representation ${\p:G\to U(\hh)}$, there exists a positive definite map ${\psi:G\to B(\hh)}$ such that 
\[
{\Phi(\p(x)^*\,\psi(x))=\tau_y\,\Phi(\p(x)^*\,\p(xy)\,\p(y)^*)}
\]
for all ${x\in G}$ and every normal linear functional ${\Phi\in B(\hh)_*}$. Then $G$ is stable.
\end{proposition}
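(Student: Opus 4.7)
The plan is to derive stability from the hypothesis by producing, for every unitary almost-representation $\p$, a unital positive definite almost-multiplicative approximation close to it, and then invoking Proposition \ref{stability-equiv}.

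First, I would fix a unitary $\e$-representation $\p\colon G\to U(\hh)$ with $\e<\delta$, and apply the hypothesis to obtain a positive definite $\psi\colon G\to B(\hh)$ satisfying the displayed identity. Setting $x=e$ in that identity and using that $\p(y)\p(y)^*=\Id_\hh$ (since $\p$ is unitary) shows $\psi(e)=\Id_\hh$, so $\psi$ is unital; since $\psi$ is positive definite and unital, $\|\psi\|\leq1$. Remark \ref{R - p close to psi} then gives the key bound $\|\p-\psi\|\leq\e$.

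Next, I would estimate the unitarity and multiplicativity defects of $\psi$. Because $\p$ is exactly unitary ($0$-unitary) and $\|\p\|+\|\psi\|\leq2$, Lemma \ref{L - unitary-prox} shows that $\psi$ is $2\e$-unitary; since $\psi$ is positive definite, Lemma \ref{P - upd epsilon multiplicative} then yields that $\psi$ is $2\e$-multiplicative. Thus for every unitary $\e$-representation $\p\colon G\to U(\hh)$ with $\e<\delta$, the hypothesis (together with Remark \ref{R - p close to psi}) produces a unital positive definite $2\e$-multiplicative map $\psi$ with $\|\p-\psi\|\leq\e$. This is precisely the kind of approximation required by condition (3) of Proposition \ref{stability-equiv}, from which stability would follow via the equivalence (3)$\Leftrightarrow$(1) established there.

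The main obstacle is that our construction lives in the borderline regime of Proposition \ref{stability-equiv}: the exponent is $p=1$ (with constants $\kappa_1=2$ and $\kappa_2=1$), while condition (3) is cleanly stated for $p>1$ so that the iteration $\e_n=\kappa_1\e_{n-1}^p$ in the proof of (2)$\Rightarrow$(1) converges super-geometrically. I expect this to be resolved by exploiting the sharpness of $\kappa_2=1$ supplied by Remark \ref{R - p close to psi} (rather than a generic $\kappa_2>1$): after converting $\psi$ to a unitary map via Lemma \ref{unitary-almost-rep-approx} and applying the hypothesis again, one can track the accumulated displacement carefully enough to extract a Cauchy sequence of unitary approximations whose limit is a genuine unitary representation of $G$ on $\hh$ within $O(\e)$ of $\p$.
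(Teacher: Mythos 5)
There is a genuine gap, and it sits exactly where you flagged your ``main obstacle.'' Your estimates only show that $\psi$ is $2\e$-unitary (via Lemma \ref{L - unitary-prox}, since $\p$ is $0$-unitary and $\|\p-\psi\|\leq\e$) and hence $2\e$-multiplicative (via Lemma \ref{P - upd epsilon multiplicative}). That is a \emph{linear} bound in $\e$, i.e.\ $p=1$, $\kappa_1=2$ in the language of Proposition \ref{stability-equiv}. The iteration in the proof of $(2)\Rightarrow(1)$ there produces $\e_n=\kappa_1\e_{n-1}^p$, which for $p=1$, $\kappa_1=2$ gives $\e_n=2^n\e$: the defects grow, the displacements $\|\p_n-\p_{n+1}\|\leq\kappa_2\e_n$ do not sum, and no amount of care about the constant $\kappa_2=1$ can rescue the telescoping argument. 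Your closing hope of ``tracking the accumulated displacement carefully'' therefore cannot close the gap; you need the defect of $\psi$ to be \emph{quadratic} in $\e$.

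The missing idea is the exact algebraic identity (the heart of Kazhdan's argument as presented in \cite{bot13}): writing $\psi(x)=(\Psi_\tau)_y\,\p(xy)\,\p(y)^*$ (which is forced by the hypothesis since $\p(x)$ is invertible) and expanding
\[
(\Psi_\tau)_y\bigl(\p(xy)-\p(x)\p(y)\bigr)\bigl(\p(xy)-\p(x)\p(y)\bigr)^*
=\Id_\hh-\psi(x)\,\psi(x)^*+\bigl(\p(x)-\psi(x)\bigr)\bigl(\p(x)-\psi(x)\bigr)^*,
\]
one gets $\|\Id_\hh-\psi(x)\,\psi(x)^*\|\leq\|\p(x)-\psi(x)\|^2+(\Psi_\tau)_y\|\p(xy)-\p(x)\p(y)\|^2\leq2\e^2$. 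This makes $\psi$ a unital positive definite $2\e^2$-unitary map with $\|\p-\psi\|\leq\e$, i.e.\ condition (4) of Proposition \ref{stability-equiv} with $\kappa_1=2$, $\kappa_2=1$, $p=2$, and then the equivalence with (1) applies. Everything else in your write-up (unitality of $\psi$, the use of Remark \ref{R - p close to psi}, the reduction to Proposition \ref{stability-equiv}) matches the paper's route; only this quadratic improvement is missing, and it is the essential step.
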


\begin{proof}
We establish (4) in Prop.\ \ref{stability-equiv}. We fix $\tau\in\mathrm{St}(\ell^\infty(G))$ and $0<\delta<1$ witnessing the assumptions of the proposition. Let $0<\e<\delta$, let   ${\p:G\to U(\hh)}$ be an   $\e$-representation, and let $\psi$ be a positive definite map such that \[
{\Phi(\p(x)^*\,\psi(x))=\tau_y\,\Phi(\p(x)^*\,\p(xy)\,\p(y)^*)}
\]
for all ${x\in G}$ and every normal linear functional ${\Phi\in B(\hh)_*}$. Since $\p\in \ell^\infty(G,\GL_1(M))$, the proof of Prop.\ \ref{amenable-crit} show that $\psi$ is uniquely determined by the formula
\[
\psi(x)=(\Psi_\tau)_y\,\p(xy)\,\p(y)^*
\]
for all $x\in G$. Since $\p$ is an $\e$-representation, Remark \ref{R - p close to psi} shows that 
\[
\|\p-\psi\|\leq \e.
\]

We show that $\psi$ is $2\e^2$-unitary, following the proof of Kazhdan's theorem given in \cite{bot13}. Thus, (4) in Prop.\ \ref{stability-equiv} holds with $\kappa_1=2$, $\kappa_2=1$, $p=2$.

Since

\begin{align*}
(\Psi_\tau)_y(\p(xy)&-\p(x)\,\p(y))(\p(xy)-\p(x)\,\p(y))^*\\
&=(\Psi_\tau)_y\p(xy)\,\p(xy)^*-\p(x)\,\psi(x)^*-\psi(x)\,\p(x)^*+\p(x)\,\p(x)^*
\\&=\Id_\hh-\psi(x)\,\psi(x)^*+(\p(x)-\psi(x))(\p(x)-\psi(x))^*
\end{align*}
it follows that 
\begin{align*}
 \|{\Id_\hh-\psi(x)\,\psi(x)^*}\|&\leq \|{(\p(x)-\psi(x))(\p(x)-\psi(x))^*}\|\\
&\hspace{1cm} + \|{(\Psi_\tau)_y(\p(xy)-\p(x)\,\p(y))(\p(xy)-\p(x)\,\p(y))^*}\|
\\&\leq \|{(\p(x)-\psi(x))(\p(x)-\psi(x))^*}\|\\
&\hspace{1cm} +(\Psi_\tau)_y \|{(\p(xy)-\p(x)\,\p(y))(\p(xy)-\p(x)\,\p(y))^*}\|
\\&\leq \|{\p(x)-\psi(x)}\| \|{(\p(x)-\psi(x))^*}\|\\
&\hspace{1cm} +(\Psi_\tau)_y \|{\p(xy)-\p(x)\,\p(y)}\,\| \|{(\p(xy)-\p(x)\,\p(y))^*}\|
\\&= \|{\p(x)-\psi(x)}\|^2+(\Psi_\tau)_y \|{\p(xy)-\p(x)\,\p(y)}\|^2\\
&<\e^2+\e^2=2\e^2
\end{align*}
for all ${x\in G}$.
\end{proof}

\begin{remark}
One can in fact establish (4) in Prop.\ \ref{stability-equiv}  with $\kappa_1=1$, $\kappa_2=1$, $p=2$. Indeed, in the notation of the proof, 
as ${\psi(x)\,\psi(x)^*}$ is positive and 
\[
{\|{\psi(x)\,\psi(x)^*}\|\leq\|{\psi}\|^2=\|{\psi(e)}\|=1}
\]
for all ${x\in G}$, it follows that ${\Id_\hh-\psi(x)\,\psi(x)^*}$ is positive for all ${x\in G}$. Furthermore, ${|({\p(x)-\psi(x))^*}|^2}$ is also positive for all ${x\in G}$ and the equality 
\[
{\Id_\hh-\psi(x)\,\psi(x)^*+|{(\p(x)-\psi(x))^*}|^2=(\Psi_\tau)_y\,|{(\p(xy)-\p(x)\,\p(y))^*}|^2} 
\]
for all ${x\in G}$ shows that 
\[
{0\leq \Id_\hh-\psi(x)\,\psi(x)^*\leq(\Psi_\tau)_y\,|{(\p(xy)-\p(x)\,\p(y))^*}|^2}
\]
for all ${x\in G}$. This then implies that 
\begin{align*}
\|{\Id_\hh-\psi(x)\,\psi(x)^*}\|&\leq\|{(\Psi_\tau)_y\,|{(\p(xy)-\p(x)\,\p(y))^*}|^2}\|
\\&\leq\sup\{\|{|{(\p(xy)-\p(x)\,\p(y))^*}|^2}:y\in G\}\|
\\&=\sup\{\|{(\p(xy)-\p(x)\,\p(y))^*}\|^2:y\in G\}
\\&=\sup\{\|{\p(xy)-\p(x)\,\p(y)}\|^2:y\in G\}\leq\e^2
\end{align*}
for all ${x\in G}$.
\end{remark}

In \cite{kazhdan82} Kazhdan proves that the compact surface groups of genus $\geq 2$ are not stable. 
More recently, an interesting construction of Rolli \cite{Rolli} gives for every finite dimensional Hilbert space $\hh$ and every $\e>0$ an $\e$-representation  $F_n\to U(\hh)$, where $F_n$ is the free group on $n\geq 2$ generators, which is not close to a unitary representation.  By using the results of Burger, Ozawa and Thom \cite{bot13}, we can obtain the following result for groups that contain $F_2$.

\begin{corollary}
Suppose that $G$ contains a non-abelian free group. For every state ${\tau\in\St(\ell^\infty(G))}$, Hilbert space $\hh$, and small enough $\e>0$, there exists an $\e$-representation ${\p: G\to U(\hh)}$ such that for every positive definite map ${\psi: G\to B(\hh)}$, there exists some normal linear functional ${\Phi\in B(\hh)^*}$ such that 
\[
{\Phi(\p(x)^*\,\psi(x))\neq\tau_y\,\Phi(\p(x)^*\,\p(xy)\,\p(y)^*)}
\]
for some ${x\in G}$.  
\end{corollary}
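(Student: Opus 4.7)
The plan is to establish the corollary by contraposition of the preceding Proposition. Suppose, for contradiction, that the conclusion fails: there exist a state $\tau\in\St(\ell^\infty(G))$ and a Hilbert space $\hh$ such that, for arbitrarily small $\e>0$, every $\e$-representation $\p\colon G\to U(\hh)$ admits a positive definite map $\psi\colon G\to B(\hh)$ satisfying
\[
\Phi\bigl(\p(x)^*\,\psi(x)\bigr)=\tau_y\,\Phi\bigl(\p(x)^*\,\p(xy)\,\p(y)^*\bigr)
\]
for all $x\in G$ and every $\Phi\in B(\hh)_*$. Choosing such an $\e<1$ and setting $\delta:=\e$ immediately verifies the hypotheses of the preceding Proposition, so that $G$ satisfies the stability condition (4) of Prop.\ \ref{stability-equiv} with respect to $\hh$.

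To reach a contradiction, I would invoke the results of Burger--Ozawa--Thom \cite{bot13} in combination with Rolli's construction \cite{Rolli}. Since $G$ contains a non-abelian free group $F_2$, these references produce, for arbitrarily small $\e>0$, $\e$-representations $\p\colon G\to U(\hh)$ that cannot be approximated in operator norm by any unitary representation of $G$. Such a $\p$ directly contradicts the stability of $G$ derived in the previous paragraph, which is exactly what is required.

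The main technical step is the correct invocation of \cite{bot13}: Rolli's original construction only gives non-unitarizable $\e$-representations of $F_n$ on finite-dimensional Hilbert spaces, and the extension from the subgroup $F_2\leq G$ to the ambient group $G$ (on a Hilbert space compatible with the one fixed above) is carried out in \cite{bot13} via an induction procedure for bounded quasi-cocycles with unitary coefficients. Once this black box is granted, the rest of the argument is a direct application of the contrapositive of the preceding Proposition, together with the remark that stability of $G$ in the sense of (4) of Prop.\ \ref{stability-equiv} is incompatible with the existence of such non-approximable $\e$-representations.
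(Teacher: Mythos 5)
Your argument is correct and is essentially the paper's own proof: the paper likewise treats the statement as the contrapositive of the preceding proposition (whose hypotheses would hold for the given $\tau$ and $\hh$ if the conclusion failed, forcing stability of $G$ with respect to $\hh$ via condition (4) of Prop.\ \ref{stability-equiv}), contradicted by Cor.\ 3.8 of \cite{bot13} for groups containing a non-abelian free subgroup. The only difference is one of detail: the paper simply cites Cor.\ 3.8 (adding that non-injectivity of the comparison map $H^2_b(G,\IR)\to H^2(G,\IR)$ already suffices, by Cor.\ 3.5 of \cite{bot13}), whereas you make the contraposition explicit and describe the Rolli/quasi-cocycle induction machinery behind that citation, including the point—left implicit in the paper as well—that the non-approximable $\e$-representation must live on the Hilbert space produced by the negation.
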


This follows by Cor.\ 3.8 in \cite{bot13}. In fact, it is sufficient to suppose that the comparison map
\[
H^2_b(G,\IR)\to H^2(G,\IR)
\]
from bounded 2-cohomology to usual 2-cohomology is not injective, by Cor.\ 3.5 in \cite{bot13}.

\section{Remarks on unitarily invariant norms}

Finally, we note that the equality
\[
{\Phi(\p(x)^*\,\psi(x))=\tau_y\,\Phi(\p(x)^*\,\p(xy)\,\p(y)^*)} \ \forall x\in G,\ \forall \Phi\in M_*
\]
implies a norm estimate on the distance between $\p$ and $\psi$, for every unitarily invariant norm on a finite  factor $M$.

\begin{proposition}\label{P - unitarily invariant norms}
Let $G$ be a countable group, $M$ be a finite factor, ${\tau\in\mathrm{St}(\ell^\infty(G))}$ be a state, ${\p,\psi:G\to M}$ be uniformly bounded maps such that 
\[
{\Phi(\p(x)^*\,\psi(x))=\tau_y\,\Phi(\p(x)^*\,\p(xy)\,\p(y)^*)}
\]
for all ${x\in G}$ and every normal linear functional ${\Phi\in M_*}$, and $\|\cdot\|$ be an ultraweakly lower semi-continuous unitarily invariant norm on $M$. Then 
\[
{\|{\p(x)-\psi(x)}\|\leq\tau_y\,\|{\p(xy)-\p(x)\,\p(y)}}\|
\]
for all ${x\in G}$. 
\end{proposition}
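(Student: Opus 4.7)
The plan is to mimic the operator-norm argument of Remark \ref{R - p close to psi} but in the generality of an ultraweakly lower semi-continuous unitarily invariant norm $\|\cdot\|$ on the finite factor $M$. Following the analogue in Remark \ref{R - p close to psi}, I would work under the tacit assumption that $\p$ takes values in the unitary group of $M$, so that unitarity of $\p(x)$ can be used to cancel the $\p(x)^*$ on the left of the hypothesis and unitarity of $\p(y)$ can be used to absorb $\p(y)^*$ into the unitarily invariant norm.

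My first step would be to re-express the hypothesis as an identity for $\Phi(\p(x)-\psi(x))$. Given $\Phi\in M_*$, the map $\Phi_1(a):=\Phi(\p(x)\,a)$ is again a normal linear functional. Applying the hypothesis to $\Phi_1$ and using $\p(x)\,\p(x)^*=\Id$ gives $\Phi(\psi(x))=\tau_y\,\Phi(\p(xy)\,\p(y)^*)$. Combining this with $\tau_y\,\Phi(\p(x))=\Phi(\p(x))$ and the identity $\p(x)=\p(x)\,\p(y)\,\p(y)^*$, and subtracting, yields
\[
\Phi(\p(x)-\psi(x))=\tau_y\,\Phi\!\bigl((\p(x)\,\p(y)-\p(xy))\,\p(y)^*\bigr).
\]

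Next, I would pass to absolute values. Positivity of the state $\tau$ gives $|\tau_y\,f(y)|\leq\tau_y\,|f(y)|$ for any bounded complex-valued $f$; the standard inequality $|\Phi(a)|\leq\|\Phi\|_*\,\|a\|$ applies pointwise in $y$, where $\|\cdot\|_*$ denotes the unitarily invariant norm on $M_*$ dual to $\|\cdot\|$; and unitary invariance of $\|\cdot\|$ together with $\p(y)$ unitary yields $\|(\p(x)\,\p(y)-\p(xy))\,\p(y)^*\|=\|\p(xy)-\p(x)\,\p(y)\|$. Assembling these gives
\[
|\Phi(\p(x)-\psi(x))|\leq\|\Phi\|_*\,\tau_y\,\|\p(xy)-\p(x)\,\p(y)\|.
\]

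Finally, I would take the supremum over $\Phi\in M_*$ with $\|\Phi\|_*\leq1$ and invoke the duality
\[
\|a\|=\sup\{\,|\Phi(a)|:\Phi\in M_*,\,\|\Phi\|_*\leq1\,\},
\]
which holds for ultraweakly lower semi-continuous unitarily invariant norms on finite factors (e.g.\ by Proposition 2.6 of \cite{dcot19}, or the classical duality theory of symmetric operator spaces). The main technical point --- and precisely why \emph{ultraweakly lower semi-continuous} is built into the hypothesis --- is this recovery of $\|a\|$ as a supremum over $M_*$; once that is granted, the argument is a direct unitarily invariant analogue of Remark \ref{R - p close to psi}.
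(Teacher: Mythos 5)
Your proof is correct and follows essentially the same route as the paper's: both rest on the duality $\|a\|=\sup\{|\Phi(a)|:\|\Phi\|_*\leq 1\}$ for ultraweakly lower semi-continuous unitarily invariant norms on a finite factor (Theorem C of \cite{fhns08}), combined with unitary invariance of the norm, the estimate $|\tau_y f(y)|\leq\tau_y|f(y)|$, and the tacit assumption that $\p$ is unitary-valued. Your reformulation via the auxiliary functional $\Phi_1(a)=\Phi(\p(x)\,a)$ is just a rearrangement of the paper's opening step $\|\p(x)-\psi(x)\|=\|\Id-\p(x)^*\,\psi(x)\|$, and your explicit flagging of the unitarity assumption (which the proposition's statement, reading only ``uniformly bounded maps,'' leaves implicit) is a fair observation.
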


\begin{proof}
It follows by Theorem C in \cite{fhns08} that 
\begin{align*}
\|{\p(x)-\psi(x)}\|&=\|{\Id_\hh-\p(x)^*\,\psi(x)}\|
\\&=\sup\{|{\Phi(\Id_\hh-\p(x)^*\,\psi(x))}|:\Phi\in M_*,\,\|{X_\Phi}\|_*\leq1\}
\\&=\sup\{|{\tau_y\,\Phi(\Id_\hh-\p(x)^*\,\p(xy)\,\p(y)^*)|}:\Phi\in M_*,\,\|{X_\Phi}\|_*\leq1\}
\\&=\sup\{|{\Phi(\Id_\hh-(\Psi_\tau)_y\,\p(x)^*\,\p(xy)\,\p(y)^*)}|:\Phi\in M_*,\,\|{X_\Phi}\|_*\leq1\}
\\&=\|{\Id_\hh-(\Psi_\tau)_y\,\p(x)^*\,\p(xy)\,\p(y)^*}\|
\\&\leq\tau_y\,\|{\Id_\hh-\p(x)^*\,\p(xy)\,\p(y)^*}\|
\\&=\tau_y\,\|{\p(x)\,\p(y)-\p(xy)}\|
\end{align*}
for all ${x\in G}$, where ${\|\cdot\|_*}$ denotes the dual norm of $\|\cdot \|$ and ${X_\Phi}$ denotes the bounded operator in $M$ giving rise to the functional $\Phi$. 
\end{proof}

\begin{remark}
The assumption that $\|\cdot\|$ be ultraweakly lower semi-continuous is redundant, since Theorem C in \cite{fhns08} implies that every unitarily invariant norm on a finite factor is ultraweakly lower semi-continuous. 
\end{remark}

\end{document}